\newcommand{\LL}{\mathbf{L}}
\newcommand{\FL}{\mathbf{FL}}
\newcommand{\Lomega}{\mathbf{L}^{\!+}_{\omega}}
\newcommand{\Linfty}{\mathbf{L}^{\!+}_{\infty}}
\newcommand{\ACTomega}{\mathbf{ACT}_{\omega}}
\newcommand{\Lcirc}{\mathbf{L}^{\!+}_{\mathrm{circ}}}
\newcommand{\PR}{\mathrm{Pr}}
\newcommand{\Tp}{\mathrm{Tp}}
\newcommand{\TOTAL}{\mbox{\sc total}}
\newcommand{\ALT}{\mbox{\sc alt}}
\newcommand{\BS}{\mathop{\backslash}}
\newcommand{\SL}{\mathop{/}}
\newcommand{\Gc}{\mathcal{G}}
\newcommand{\Lc}{\mathcal{L}}
\newcommand{\Uc}{\mathcal{U}}
\newcommand{\Nc}{\mathcal{N}}
\newcommand{\Top}{\mathrm{top}}
\newcommand{\IS}{\mathrm{is}}
\newcommand{\FG}{\mathbb{F}}
\newcommand{\FGI}[1]{[\![#1]\!]}
\begin{document}

\title{The Lambek Calculus with Iteration: Two Variants}

\titlerunning{The Lambek Calculus with Iteration}  
%
\author{Stepan Kuznetsov\thanks{This work is supported by the Russian Science Foundation under grant \mbox{16-11-10252.}
Accepted for presentation at WoLLIC 2017 and publication in Springer LNCS.}}
\institute{Steklov Mathematical Institute of RAS, Moscow, Russia\\
\email{sk@mi.ras.ru}}

\maketitle              

\begin{abstract}
Formulae of the Lambek calculus are constructed using three
binary connectives, multiplication and two divisions. We extend
it using a unary connective, positive Kleene iteration.
For this new operation, following its natural interpretation,
we present two lines of calculi. The first one is a fragment 
of infinitary action logic and includes an omega-rule for
introducing iteration to the antecedent. We also consider
a version with infinite (but finitely branching) derivations
and prove equivalence of these two versions. In Kleene algebras, 
this line of calculi corresponds to the *-continuous case. 
For the second line, we restrict our infinite derivations to 
cyclic (regular) ones. 
We show that this system is equivalent to a variant of action
logic that corresponds to general residuated Kleene algebras, not 
necessarily *-continuous. Finally, we show that, in contrast 
with the case without division operations (considered by Kozen), 
the first system is strictly stronger than the second one. 
To prove this, we use a complexity argument. Namely, we show, 
using methods of Buszkowski and Palka, that the first system is 
$\Pi_1^0$-hard, and therefore is not recursively enumerable and 
cannot be described by a calculus with finite derivations.
\keywords{Lambek calculus, positive iteration, infinitary action logic, cyclic proofs}
\end{abstract}

\section{The Infinitary Lambek Calculus with Positive Iteration}

The Lambek calculus $\LL$~\cite{Lambek58} deals with formulae that are built using three
connectives, $\cdot$ (product), $\BS$, and $\SL$ (left and right divisions).  These connectives
enjoy a natural interpretation as operations on formal languages (completeness shown
by Pentus~\cite{Pentus95}). There are, however, also other interesting and well-respected
operations on formal languages, and it is quite natural to try to extend $\LL$ by adding
these operations as new connectives.

One of the most common of such operations is {\em iteration,} or 
{\em Kleene star:} for a language $M$ over an alphabet $\Sigma$ its iteration is defined
as follows: $$M^* = \{ u_1 \dots u_n \mid n \geq 0, u_i \in M \}.$$
As one can notice, $M^*$ always includes the empty word, $\varepsilon$. The original
Lambek calculus, however, obeys so-called {\em Lambek's non-emptiness restriction,} that
is, the empty sequence is never allowed in $\LL$ (this restriction is motivated by linguistic
applications; from the algebraic point of view, this means that we're considering residuated
semigroups instead of residuated monoids). Therefore, throughout this paper we consider
a modified version of Kleene star, called {\em positive iteration:}
$$M^+ = \{ u_1 \dots u_n \mid n \geq 1, u_i \in M \} = M^* - \{ \varepsilon \}.$$

In this paper, we introduce several extensions of the Lambek calculus with this new connective,
establish connections between them, and prove some complexity bounds.

Formulae of the Lambek calculus with positive iteration, usually called {\em types,} are built
from a countable set of variables (primitive types) $\PR = \{ p_1, p_2, p_3, \dots \}$ using
three binary connectives, $\cdot$, $\BS$, and $\SL$, and one unary connective, ${}^+$
(written in the postfix form, $A^+$). The set of all types is denoted by $\Tp$.
Types are denoted by capital Latin letters; capital Greek letters
stand for finite linearly ordered sequences of types.

Derivable objects are {\em sequents} of the form $\Pi \to A$, where $A \in \Tp$ and
$\Pi$ is a {\em non-empty} finite sequence of types.

Now let's define the first calculus for positive iteration, $\Lomega$. The axioms and the rules
for $\cdot$, $\BS$, and $\SL$ are the same as in the original Lambek calculus $\LL$:

$$
\infer[(\mathrm{ax})]{A \to A}{}
$$
$$
\infer[(\to\BS)\mbox{, where $\Pi$ is non-empty}]{\Pi \to A \BS B}{A, \Pi \to B}
\qquad
\infer[(\BS\to)]{\Gamma, \Pi, A \BS B, \Delta \to C}{\Pi \to A & \Gamma, B, \Delta \to C}
$$
$$
\infer[(\to\SL)\mbox{, where $\Pi$ is non-empty}]{\Pi \to B \SL A}{\Pi, A \to B}
\qquad
\infer[(\SL\to)]{\Gamma, B \SL A, \Pi, \Delta \to C}{\Pi \to A & \Gamma, B, \Delta \to C}
$$
$$
\infer[(\to\cdot)]{\Gamma, \Delta \to A \cdot B}{\Gamma \to A & \Delta \to B}
\qquad
\infer[(\cdot\to)]{\Gamma, A \cdot B, \Delta \to C}{\Gamma, A, B, \Delta \to C}
$$

For ${}^+$, this calculus includes a countable set of right rules:
$$
\infer[(\to{}^+)_n\mbox{, for $n \ge 1$}]{\Pi_1, \dots, \Pi_n \to A^+}{\Pi_1 \to A & \ldots & \Pi_n \to A}
$$
and one left rule
$$
\infer[({}^+\to)_\omega]
{\Gamma, A^+, \Delta \to C}
{\Gamma, A, \Delta \to C & \Gamma, A, A, \Delta \to C & \Gamma, A, A, A, \Delta \to C & \ldots}
$$
This rule is an {\em $\omega$-rule,} or an {\em infinitary} rule. Application of such a rule makes the proof tree
infinite. This is somewhat unpleasant from the computational point of view, but, as we show later on, it appears
to be inevitable.

The rules $(\to{}^+)_n$ and $({}^+\to)_\omega$ come from the rules for iteration in {\em infinitary action logic}, 
$\ACTomega$~\cite{Buszkowski07}. Our system $\Lomega$ differs from
$\ACTomega$ in the following two points.
\begin{enumerate}
\item $\Lomega$ enriches the ``pure'' (multiplicative) Lambek calculus $\LL$, while $\ACTomega$ is based on the full Lambek calculus $\FL$,
including also additive conjunction ($\wedge$) and disjunction ($\vee$). This means that complexity lower bounds for $\Lomega$ are stronger
results than lower bounds for $\ACTomega$.
\item In contrast to $\ACTomega$, in $\Lomega$ we have Lambek's non-emptiness restriction, and therefore use positive iteration instead of Kleene star.
\end{enumerate}

The cut rule of the form
$$
\infer[(\mathrm{cut})]{\Gamma, \Pi ,\Delta \to C}{\Pi \to A & \Gamma, A, \Delta \to C}
$$
is admissible in $\Lomega$. This fact is proved by the same transfinitary cut-elimination procedure, as presented by Palka~\cite{Palka07}
 for $\ACTomega$ (for a restricted fragment of $\Lomega$ cut elimination was independently shown by Ryzhkova~\cite{Ryzhkova13}).

The admissibility of $(\mathrm{cut})$ yields the fact that the rules $(\to\BS)$, $(\to\SL)$, $(\cdot\to)$, and, most interestingly, $({}^+\to)_\omega$ are
invertible. 

The Lambek calculus $\LL$, defined by axioms $(\mathrm{ax})$ and rules $(\to\BS)$, $(\BS\to)$, $(\to\SL)$, $(\SL\to)$, $(\to\cdot)$, and $(\cdot\to)$, is
a conservative fragment of $\Lomega$. Cut elimination for $\LL$ was known already by Lambek~\cite{Lambek58}.

The calculus $\Lomega$ defined in this section is sound with respect to the intended interpretation on formal languages, where ${}^+$ is interpreted
as positive iteration:
$$
M^+ = \{ u_1 \dots u_n \mid n \geq 1, u_i \in M \},
$$
and the Lambek connectives are interpreted in the same way as for $\LL$:
\begin{align*}
M \cdot N &= \{ uv \mid u \in M, v \in N \},\\
M \BS N &= \{ u \in \Sigma^+ \mid (\forall v \in M) \, vu \in N \}, \\
N \SL M &= \{ u \in \Sigma^+ \mid (\forall v \in M) \, uv \in N \}.
\end{align*}
The arrow, $\to$, is interpreted as the subset relation.

Completeness with respect to this interpretation is an open problem. 

\section{$\Pi_1^0$-completeness of $\Lomega$}

In this section we prove that derivability in $\Lomega$ is $\Pi_1^0$- (co-r.e.-) hard. Basically, we follow the same strategy as Buszkowski~\cite{Buszkowski07},
namely, encoding the totality problem for context-free grammars. Our construction, however, is more involved: instead of embedding context-free grammars into
the Lambek environment as Ajdukiewicz -- Bar-Hillel basic categorial grammars, we use another translation by Safiullin~\cite{Safiullin07} which yields a categorial
grammar that assigns {\em exactly one type} to each letter of the alphabet. This trick allows us to avoid using additive operations, and prove the complexity
lower bound for the extension of the original, purely multiplicative Lambek calculus $\LL$.
For the purely multiplicative fragment of $\ACTomega$, the lower complexity bound was left as an open problem by Buszkowski in~\cite{Buszkowski07}. Here we solve not
that problem exactly, but its version with Lambek's restriction.

Throughout this paper, all languages do not contain the empty word. Accordingly, all context-free grammars do not contain $\varepsilon$-rules.
By $\TOTAL^+$ we denote the set of all context-free grammars $\Gc$ such that the language generated by $\Gc$ is the set of all non-empty words, $\Sigma^+$.
The problem $\TOTAL^+$ is $\Pi_1^0$-hard.  Indeed, as shown in~\cite{Buszkowski07}, $\TOTAL$, the totality problem for context-free grammars possibly
using the empty word, reduces to $\TOTAL^+$. In its turn, $\TOTAL$ itself is known to be undecidable, and standard proofs of this fact actually yield more:
they reduce a well-known $\Sigma_1^0$- (r.e.-) complete problems, e.g., Post's correspondence problem~\cite[Theorem~9.22]{Hopcroft01} or
halting problem for Turing machines~\cite[Example~5.43]{Du01}, to the {\em complement} of $\TOTAL$. This makes $\TOTAL$ and $\TOTAL^+$ themselves $\Pi_1^0$-complete.

We can further restrict ourselves to context-free grammars over a two-letter alphabet, $\{ b, c \}$. Denote the $\varepsilon$-free totality problem 
over $\{ b, c\}$ by $\TOTAL^+_2$. The original problem $\TOTAL^+$ is reduced to $\TOTAL^+_2$ in the following way. Let $\Gc$ be a context-free
grammar that defines a language $\Lc(\Gc)$ over $\Sigma = \{ a_0, a_1, \dots, a_n \}$. The homomorphism $h \colon a_i \mapsto b^{i} c$ is a one-to-one
correspondence between $\Sigma^+$ and $\{ u \in \{ b,c \}^+ \mid \mbox{$u$ ends on $c$ and doesn't contain $b^{n+1}$ as a subword} \}$. Now we
can computably transform $\Gc$ into a new context-free grammar $\Gc'$ for the language $h(\Lc(\Gc)) \cup \{ u \in \{b,c\}^+ \mid \mbox {$u$ ends on $b$ or contains
$b^{n+1}$ as a subword} \}$ over $\{b,c\}$. Clearly, $\Gc \in \TOTAL^+ \iff \Gc' \in \TOTAL_2^+$. This establishes the necessary reduction and $\Pi_1^0$-hardness
of $\TOTAL_2^+$.

Finally, we consider the {\em alternation problem} for context-free grammars over $\{ b, c \}$, denoted by $\ALT_2$. A context-free grammar
$\Gc$ belongs to $\ALT_2$ if the language it generates includes the language $(\{b\}^+ \{c\}^+)^+ = \{ b^{m_1} c^{k_1} \ldots b^{m_n} c^{k_n} \mid
n \geq 1, m_i \geq 1, k_j \geq 1 \}$ (as a subset). Clearly, $\ALT_2$ is also $\Pi_1^0$-hard by reduction of $\TOTAL_2^+$, since
$M = \{ b, c \}^+ \iff \{b\} \cdot M \cdot \{c\}   \supseteq (\{b\}^+ \{c\}^+)^+$.

Now we need an encoding of context-free grammars in the Lambek calculus. A {\em Lambek categorial grammar with unique type assignment} over
the alphabet $\{ a_1, \dots, a_n \}$ consists of $(n+1)$ types (without the $^+$ connective)
 $A_1, \dots, A_n, H$ ($H$ is called the {\em target} type), and a word $w = a_{i_1} \dots a_{i_m}$ belongs to the
language generated by this grammar if{f} the sequent $A_{i_1}, \dots, A_{i_m} \to H$ is derivable in $\LL$. These grammars have the same expressive
power as context-free grammars (without $\varepsilon$-rules).

\begin{theorem}[A. Safiullin, 2007]\label{Th:Safiullin}
For every context-free language there exists, and can be effectively constructed from the original context-free grammar,
a Lambek categorial grammar with unique type assignment.~\rm{\cite{Safiullin07}}
\end{theorem}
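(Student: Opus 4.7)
The plan is to start with the given context-free grammar $\Gc$, convert it to Greibach normal form without $\varepsilon$-rules (permissible since $\Lc(\Gc)$ does not contain the empty word), and read off a Lambek categorial grammar from it. In Greibach form, every production has the shape $N \to a B_1 \dots B_k$ with $a$ a terminal and the $B_i$ nonterminals; the naive Ajdukiewicz -- Bar-Hillel style translation assigns to $a$ a \emph{set} of Lambek types, one per production $N \to a B_1 \dots B_k$ headed by $a$, namely $p_N \SL (p_{B_1} \cdot \ldots \cdot p_{B_k})$ (or simply $p_N$ if $k = 0$), with target type $H = p_S$ for the start symbol $S$. The entire task is to collapse this multi-type assignment into a single type per terminal.

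To do this, I would introduce, for each terminal $a$ whose productions are $N_1 \to a \alpha_1, \dots, N_r \to a \alpha_r$, a supply of fresh primitive ``selector'' atoms $q_{a,1}, \dots, q_{a,r}$, and craft a single composite type $A_a$, built purely from $\cdot$, $\BS$, $\SL$ and the $q_{a,i}$, such that in any $\LL$-derivation an occurrence of $A_a$ can \emph{commit} to exactly one of its $r$ alternatives by consuming the corresponding selector. A carefully engineered target type $H$ is then set up to supply the correct selectors in the correct positions along the sequent, and to enforce that the committed alternatives, read together, form a legal leftmost CFG derivation of $w$ ending at $S$.

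The main obstacle is the absence of any additive connective in the purely multiplicative Lambek calculus: one cannot directly write a disjunctive type like $T_1$-or-$T_2$ and assign it to $a$. The technical heart of Safiullin's construction is therefore a purely multiplicative encoding of bounded nondeterministic choice, using only $\cdot$, $\BS$, $\SL$, and the fresh selector atoms, together with a matching scaffold baked into $H$ that guarantees each occurrence of $A_a$ activates exactly one alternative. Once such an encoding is in hand, correctness splits into the two standard directions: from a CFG derivation of $w = a_{i_1} \dots a_{i_m}$, one constructs an $\LL$-derivation of $A_{i_1}, \dots, A_{i_m} \to H$ by induction on the derivation tree, choosing at each step the selector matching the production applied; conversely, from a cut-free $\LL$-proof of such a sequent one reads back a CFG derivation of $w$ by tracking, for each $A_{i_j}$, which alternative its selector committed to. Cut elimination for $\LL$ keeps this inverse direction finite and case-analyzable, and the whole construction is manifestly effective in $\Gc$.
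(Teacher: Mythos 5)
Your overall plan --- convert to Greibach normal form, assign one naive AB-type per production, and then collapse the resulting multi-type assignment into a single type per terminal --- matches the shape of the paper's argument. But the proposal stops exactly at the point where the real work begins. You write that ``the technical heart of Safiullin's construction is therefore a purely multiplicative encoding of bounded nondeterministic choice'' and then proceed \emph{as if} such an encoding were in hand. It is not: producing it is the entire content of the theorem, and nothing in your sketch indicates how to do it. The paper's solution is the $\IS(\Uc)$ construction: the alternatives $K$ for a letter are deliberately designed as zero-balance types (wrapped as $r \SL (\dots \BS r)$ so that their free-group interpretations coincide), Pentus's conjoinability theorem is invoked to obtain common ``join'' types $F$ and $G$, and these are assembled into a single type $\IS(\Uc) = ((s \SL E) \cdot B) \BS s \SL C$ whose cut-free derivations are forced, by an analysis of principal occurrences and tops (Lemmas~\ref{Lm:principal}--\ref{Lm:is2}), to factor through exactly one disjunct. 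Without conjoinability (or some equally strong substitute) there is no known way to simulate the needed disjunction multiplicatively, so your proof has a genuine missing idea rather than a routine omitted detail.

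Moreover, the specific mechanism you gesture at --- fresh selector atoms $q_{a,1},\dots,q_{a,r}$ consumed by each occurrence of $A_a$, with ``a carefully engineered target type $H$'' supplying ``the correct selectors in the correct positions along the sequent'' --- does not work as stated. The target type $H$ is a single fixed type, independent of the input word, whereas the number and positions of the choices to be resolved grow with the length of $w$; a fixed multiplicative type cannot dispense an unbounded, position-dependent supply of selector atoms across the sequent. In the paper the choice apparatus lives entirely \emph{inside} the letter type ($A_j = p \SL (D_j \cdot p)$ with $D_j = \IS(\Uc_j)$), and the target type is just $H_0 = p \SL ((p_0 \SL p_0)\cdot p)$, one of the ordinary nonterminal types; no external scaffold is needed. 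The forward direction of your correctness argument is standard, and the backward direction would indeed rest on cut elimination, but both depend on the unconstructed encoding, so the proof as written cannot be completed.
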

The inverse translation, from Lambek categorial grammars to context-free grammars, is also available due to Pentus~\cite{Pentus93} (in this paper we
don't need it). In order to make this paper logically self-contained, we revisit Theorem~\ref{Th:Safiullin} and give its full proof in the Appendix
(in Safiullin's paper~\cite{Safiullin07}, the proof is only briefly sketched).

Now we're ready to prove the main result of this section.

\begin{theorem}\label{Lomega_Pi}
The derivability problem for $\Lomega$ is $\Pi_1^0$-complete.
\end{theorem}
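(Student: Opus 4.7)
The theorem combines a $\Pi_1^0$ upper bound and a matching hardness bound; the hardness part is the substantive contribution and will occupy most of the plan. For the upper bound, I rely on the standard proof-theoretic argument for systems with $\omega$-rules: thanks to cut elimination and the invertibility of the left rules (in particular of $({}^+\to)_\omega$ and $(\cdot\to)$), non-derivability of a sequent $\Pi\to C$ can be witnessed by a finite ``failed branch'' in the cut-free proof-search tree---a path in which, at every application of $({}^+\to)_\omega$, a single non-derivable premise is selected, ending at an atomic non-axiom. Such failure certificates form an r.e.\ set, so derivability is $\Pi_1^0$.

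For hardness, the plan is to reduce the $\Pi_1^0$-hard problem $\ALT_2$ to derivability in $\Lomega$. Given a context-free grammar $\Gc$ over $\{b,c\}$, I first invoke Safiullin's Theorem~\ref{Th:Safiullin} to obtain a Lambek categorial grammar with unique type assignment: two ${}^+$-free Lambek types $B$ and $C$ for the letters $b$ and $c$ respectively, and a target type $H$, such that for every non-empty word $w=a_{i_1}\dots a_{i_m}$ over $\{b,c\}$ one has $w\in\Lc(\Gc)$ iff $A_{i_1},\dots,A_{i_m}\to H$ is derivable in $\LL$ (with $A_{i_j}\in\{B,C\}$ chosen according to $a_{i_j}$). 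I then associate to $\Gc$ the single $\Lomega$-sequent
$$
(B^+ \cdot C^+)^+ \to H,
$$
and claim that $\Gc\in\ALT_2$ iff this sequent is derivable in $\Lomega$.

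To verify the claim, I peel the antecedent from the outside in using the invertibility of $({}^+\to)_\omega$ and $(\cdot\to)$: the outer ${}^+$ unfolds, for each $n\geq 1$, to $n$ copies of $B^+\cdot C^+$; each product splits into $B^+,C^+$; and every interior $B^+$ and $C^+$ unfolds into an arbitrary positive power. Thus derivability of $(B^+\cdot C^+)^+\to H$ in $\Lomega$ is equivalent to the simultaneous derivability of the family
$$
B^{m_1}, C^{k_1}, \dots, B^{m_n}, C^{k_n} \to H, \qquad n\geq 1,\ m_i,k_j\geq 1.
$$
These sequents are ${}^+$-free, so by conservativity of $\LL$ in $\Lomega$ their $\Lomega$-derivability coincides with their $\LL$-derivability, which by Safiullin's construction is exactly membership of $b^{m_1}c^{k_1}\dots b^{m_n}c^{k_n}$ in $\Lc(\Gc)$. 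Hence the sequent is derivable iff $(\{b\}^+\{c\}^+)^+\subseteq\Lc(\Gc)$, iff $\Gc\in\ALT_2$, yielding the reduction. The main subtlety is that the full ``outside-in'' decomposition must be carried out in \emph{cut-free} form, which in turn rests on the transfinite cut-elimination of Palka adapted to $\Lomega$; once this is in hand, the rest of the argument is essentially bookkeeping on the shape of the antecedent.
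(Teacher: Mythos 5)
Your proposal follows essentially the same route as the paper: the lower bound reduces $\ALT_2$ by applying Safiullin's unique-type-assignment grammar to $\Gc$ and testing the single sequent $(B^+\cdot C^+)^+\to H$, unfolded via invertibility of $(\cdot\to)$ and $({}^+\to)_\omega$ and conservativity of $\Lomega$ over $\LL$ --- exactly the paper's argument. The one caveat concerns your upper-bound sketch: a single ``failed branch'' does not by itself certify non-derivability, since failure must be witnessed against \emph{every} choice of rule application and context splitting (the rules $(\BS\to)$, $(\SL\to)$, $(\to\cdot)$, and $(\to{}^+)_n$ are not invertible, so proof search genuinely branches); the paper instead appeals to Palka's argument, which presents derivability as a uniform countable intersection of decidable approximations, and you should do the same rather than rely on the branch-selection picture.
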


\begin{proof}
The fact that this problem belongs to class $\Pi_1^0$ (the upper bound) is established by the same argument as for $\ACTomega$ in~\cite{Palka07}.

To prove $\Pi_0^1$-hardness of the derivability problem in $\Lomega$ (the lower bound), we encode $\ALT_2$. For every context-free grammar $\Gc$ over
$\{b,c\}$ we algorithmically construct an $\Lomega$-sequent $E \to H$ such that
$$\Gc \in \ALT_2 \iff E \to H \text{ is derivable in $\Lomega$}.$$
First we apply Theorem~\ref{Th:Safiullin} to $\Gc$ and obtain a Lambek categorial grammar with unique type assignment. In this case,
it consists of three types, $B$, $C$, and $H$. Next, let $E = (B^+ \cdot C^+)^+$.

Now, since the $(\cdot\to)$ and $({}^+\to)$ rules are invertible, the sequent $E \to H$ is derivable in $\Lomega$ if{f} for any positive
natural numbers $n$, $m_1$, \dots, $m_n$, $k_1$, \dots, $k_n$ the sequent $B^{m_1}, C^{k_1}, \dots, B^{m_n}, C^{k_n} \to H$ is derivable
in $\Lomega$, and, since it doesn't contain $^{+}$, by conservativity also in the Lambek calculus $\LL$. By definition of Lambek grammar,
this is equivalent to $b^{m_1} c^{k_1} \dots b^{m_n} c^{k_n} \in \Lc(\Gc)$. Therefore, $E \to H$ is derivable if{f} the language generated by $\Gc$ 
includes all words of the form $b^{m_1} c^{k_1} \dots b^{m_n} c^{k_n}$, i.e., $\Gc \in \ALT_2$.
\end{proof}

\section{The Calculus with Infinite Derivation Branches}

In this section we define $\Linfty$, another infinitary calculus that extends $\LL$ with positive iteration, in the spirit of
sequent systems with non-well-founded derivations for other logics~\cite{Brotherston11}\cite{Mints78}\cite{Shamkanov14}. Compared to $\Lomega$,
$\Linfty$ has a finite number of rules and each rule has a finite number of premises. The tradeoff is that now derivation trees are allowed
to have infinite depth.

The Lambek part (rules for $\BS$, $\SL$, and $\cdot$) is taken from $\LL$. The rules for
positive iteration are as follows:
$$
\infer[(\to{}^+)_1]{\Pi \to A^+}{\Pi \to A}
\quad
\infer[(\to{}^+)_\mathrm{L}]{\Pi_1, \Pi_2 \to A^+}{\Pi_1 \to A & \Pi_2 \to A^+}
$$
$$
\infer[({}^+\to)_\mathrm{L}]{\Gamma, A^+, \Delta \to C}
{\Gamma, A, \Delta \to C & \Gamma, A, A^+, \Delta \to C}
$$

As said before, we allow infinitely deep derivations. For the cut-free version, any trees with possibly infinite paths are allowed, but for the
calculus with $(\mathrm{cut})$ one has to be extremely cautious. Clearly, allowing arbitrary infinite proofs would yield dead circles  without actually
using rules for $^+$:
$$
\infer[(\mathrm{cut})]{\vphantom{t} p \to q}{p \to p & 
\infer[(\mathrm{cut})]{\vphantom{t} p \to q}{p \to p & 
\infer[(\mathrm{cut})]{\vphantom{t} p \to q}{\iddots}}}
$$
Such ``derivations'' should be ruled out. There are, however, trickier cases like the following:
$$
\infer[({}^+\to)_\mathrm{L}]
{p^+ \to p}
{p \to p & \infer[(\mathrm{cut})]{p, p^+ \to p}
{\infer[(\to{}^+)_\mathrm{L}]{p, p^+ \to p^+}{p \to p & p^+ \to p^+} & 
\infer[({}^+\to)_\mathrm{L}]{p^+ \to p}{\iddots}}}
$$
Here in the only infinite path we can see an infinite number of $({}^+\to)$ applications. However,
the resulting sequent, $p^+ \to p$, is not valid under the formal language interpretation (e.g., $\{a\}^+ \not\subseteq \{a\}$) and
therefore should not be derivable.

For the calculus with $(\mathrm{cut})$, we impose the following constraint on the infinite derivation tree:
{\em in each infinite path there should be an infinite number of applications of $({}^+\to)_{\mathrm{L}}$ 
with \textbf{the same} active occurrence of $A^+$} (the occurrence is tracked by
individuality from bottom to top), cf.~\cite[Definition~5.5]{Brotherston11}.

In our example that ``derives'' $p^+ \to p$, the occurrence of $p^+$ that is active in the lower application of $({}^+\to)_\mathrm{L}$
tracks to the {\em left} premise, and the $p^+$ that goes further to the infinite path is {\em another} occurrence generated by cut.
For the cut-free system, this constraint holds automatically.

Also notice that the rules in $\Linfty$ are asymmetric: we don't introduce the rules where $A$ appears to the right of 
$A^+$. Yet, this calculus is equivalent to the symmetric system~$\Lomega$ (Proposition~\ref{Prop:inftyeq}). A motivation
for this asymmetry is explained in the end of Section~\ref{S:cyclic}.

We generalize both $\Linfty$ and $\Lomega$ by adding the additive disjunction, $\vee$, governed by the following rules:
$$
\infer[(\vee\to)]{\Gamma, A_1 \vee A_2, \Delta \to C}{\Gamma, A_1, \Delta \to C & \Gamma, A_2, \Delta \to C}
\qquad
\infer[(\to\vee)]{\Gamma \to A_1 \vee A_2}{\Gamma \to A_i}
$$
and denote the extensions by $\Linfty(\vee)$ and $\Lomega(\vee)$ respectively.

The cut-free calculi $\Lomega(\vee)$ and $\Linfty(\vee)$ (and, therefore, their conservative
fragments $\Lomega$ and $\Linfty$) are equivalent.

\begin{proposition}\label{Prop:inftyeq}
A sequent is derivable in $\Lomega(\vee)$ if{f} it is derivable in $\Linfty(\vee)$.
\end{proposition}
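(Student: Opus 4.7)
The plan is to prove the two implications separately, and the two directions call for quite different techniques. The forward direction unfolds the $\omega$-rule of $\Lomega(\vee)$ into an infinite tree of binary $({}^+\to)_\mathrm{L}$ applications. The converse exploits the invertibility of $({}^+\to)_\omega$ in $\Lomega(\vee)$ together with a transfinite recursion on the (possibly non-well-founded) $\Linfty(\vee)$-derivation.

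For $\Lomega(\vee) \vdash S \Rightarrow \Linfty(\vee) \vdash S$, I induct on the $\Lomega(\vee)$-derivation, whose depth is in general an ordinal because of $({}^+\to)_\omega$. The Lambek rules and the $\vee$-rules carry over unchanged. The rule $(\to{}^+)_n$ is simulated by stacking $n - 1$ applications of $(\to{}^+)_\mathrm{L}$ and capping them with one $(\to{}^+)_1$, verified by a routine induction on $n$. For $({}^+\to)_\omega$, given $\Linfty(\vee)$-derivations $\tau_k$ of $\Gamma, A^k, \Delta \to C$ for every $k \geq 1$ (supplied by the induction hypothesis), I construct an infinitely nested tree: at depth $k$ I split $\Gamma, A^{k-1}, A^+, \Delta \to C$ via $({}^+\to)_\mathrm{L}$, with left premise $\Gamma, A^k, \Delta \to C$ (supplied by $\tau_k$) and right premise $\Gamma, A^k, A^+, \Delta \to C$, which is further split at depth $k+1$. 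The only infinite branch is the rightmost one and consists entirely of $({}^+\to)_\mathrm{L}$ applications tracking the same $A^+$ occurrence, so the bar side condition is met.

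For $\Linfty(\vee) \vdash S \Rightarrow \Lomega(\vee) \vdash S$, I first verify that each $\Linfty(\vee)$-rule is admissible in $\Lomega(\vee)$. The rule $(\to{}^+)_1$ is just $(\to{}^+)_n$ with $n = 1$; the rule $(\to{}^+)_\mathrm{L}$ follows by two cuts against the auxiliary derivable sequent $A, A^+ \to A^+$ (itself obtained from all $A^{k+1} \to A^+$ via $({}^+\to)_\omega$); and $({}^+\to)_\mathrm{L}$ follows by inverting $({}^+\to)_\omega$ on its right premise to obtain $\Lomega(\vee)$-derivations of all $\Gamma, A^{k+1}, \Delta \to C$, which together with the left premise feed the countably many premises of one $({}^+\to)_\omega$. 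This handles any well-founded $\Linfty(\vee)$-derivation. The main obstacle is a genuinely non-well-founded $\Linfty(\vee)$-derivation $\pi$ of $S$. Since we may assume $\pi$ is cut-free, every infinite branch of $\pi$ contains infinitely many $({}^+\to)_\mathrm{L}$ applications (all other rules strictly decrease formula complexity), and each such application is invertible in the cut-free fragment. I define by transfinite recursion a translation $T(\pi)$ into $\Lomega(\vee)$: at a non-$({}^+\to)_\mathrm{L}$ last rule, $T$ translates the finitely many subderivations and applies the corresponding admissible $\Lomega(\vee)$-rule; at a $({}^+\to)_\mathrm{L}$ last rule with subderivations $\pi_1$ and $\pi_2$, I iterate the invertibility of $({}^+\to)_\mathrm{L}$ on $\pi_2$ to obtain $\Linfty(\vee)$-derivations of $\Gamma, A^k, \Delta \to C$ for every $k \geq 2$, apply $T$ recursively to each (and to $\pi_1$ for $k = 1$), and combine via a single $({}^+\to)_\omega$. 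Well-foundedness of $T$ is ensured by the bar condition, which induces an ordinal ranking on cut-free $\Linfty(\vee)$-derivations under which every recursive call strictly descends.
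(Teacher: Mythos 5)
Your forward direction is fine and matches the paper's one-line treatment (unfold $({}^+\to)_\omega$ into an infinite rightmost branch of $({}^+\to)_\mathrm{L}$, simulate $(\to{}^+)_n$ by stacking $(\to{}^+)_\mathrm{L}$). Your converse, however, departs completely from the paper, which avoids any recursion on the non-well-founded derivation: it passes to the $n$-th \emph{negative mappings} (replace every negative occurrence of $A^+$ by $A \vee A^2 \vee \ldots \vee A^n$), shows these are derivable and, having no negative ${}^+$, admit finite cut-free derivations, and then recovers the original sequent by Palka's $*$-elimination argument. It is in your converse that there is a genuine gap.

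The gap is the termination of $T$. Your justification --- that ``the bar condition induces an ordinal ranking on cut-free $\Linfty(\vee)$-derivations under which every recursive call strictly descends'' --- cannot be right as stated: no ordinal ranking of the nodes of a non-well-founded tree descends along every edge (an infinite branch would yield an infinite descending sequence of ordinals), and for cut-free derivations the bar condition is automatic and says nothing about endsequents. What it does give you is only the \emph{local} half of termination: every chain of recursive calls on immediate subderivations at non-$({}^+\to)_\mathrm{L}$ nodes reaches an axiom or a $({}^+\to)_\mathrm{L}$ node in finitely many steps. It gives you nothing about the \emph{jumps}: at a $({}^+\to)_\mathrm{L}$ node with conclusion $\Gamma, A^+, \Delta \to C$ you recurse on the inverted derivations of $\Gamma, A^k, \Delta \to C$, which are not subderivations of $\pi$ and are themselves non-well-founded whenever $A$ still contains ${}^+$ in negative position (e.g.\ $A = B^+$), so as written nothing prevents the recursion from running forever. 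To close the gap you need a measure on endsequents, not on derivations --- for instance the multiset of negative occurrences of ${}^+$-formulas under the multiset extension of the proper-subformula order: it does not grow at local steps and strictly drops at a jump, because the occurrence of $A^+$ and its negative ${}^+$-subformulas are traded for $k$ copies of the negative ${}^+$-subformulas of $A$, all proper subformulas of $A^+$; paired lexicographically with the local depth this would make $T$ terminate. You would also need to actually prove the invertibility of $({}^+\to)_\mathrm{L}$ on cut-free non-well-founded derivations (a corecursive construction you assert but do not establish), and note that ``we may assume $\pi$ is cut-free'' is not a reduction you are entitled to --- cut elimination for $\Linfty$ is explicitly left open in the paper --- so your argument addresses only the cut-free formulation, which is what the proposition in fact asserts but should be said explicitly.
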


\begin{proof}[sketch of]
The {\em ``only if''} part is trivial: the $\omega$-rule is derivable in $\Linfty(\vee)$
and so are the $(\to{}^+)_n$ rules.
All other rules are the same.

For the {\em ``if''} part, we make use of the $*$-elimination result by Palka~\cite{Palka07}. 
We consider the {\em $n$-th negative mapping} that replaces any negative occurrence of $A^+$
(polarity is defined as usual) by $A \vee A^2 \vee \ldots \vee A^n$ and show that if a sequent is derivable
in $\Linfty(\vee)$, than all its negative mappings are also derivable. In the negative mapping, however, there are
no negative occurrences of $^+$, and therefore its cut-free derivation doesn't have infinite branches.
Moreover, we replace each $(\to{}^+)_\mathrm{L}$ rule application with the following subderivation:
$$
\infer[(\mathrm{cut})]{\Pi_1, \Pi_2 \to A^+}{\infer{\Pi_1, \Pi_2 \to A \cdot A^+}{\Pi_1 \to A & \Pi_2 \to A^+} &
\infer{A \cdot A^+ \to A^+}{A, A^+ \to A^+}}
$$
The sequent $A, A^+ \to A^+$ is derivable in $\Lomega(\vee)$, using the $\omega$-rule.
Thus, the negative mapping of the original is derivable in $\Lomega(\vee)$ using cut, and, by cut elimination,
has a cut-free derivation. Then we go backwards and show, following the argument of
Palka~\cite{Palka07}, that the original sequent is derivable in $\Lomega(\vee)$.
\end{proof}

\section{The Cyclic Calculus}\label{S:cyclic}
Now let's consider the following example:
$$
\infer[(\to\BS)]{(p \BS p)^+ \to p \BS p}
{\infer[({}^+\to)]{p, (p \BS p)^+ \to p}{p, p \BS p \to p & \infer[(\BS\to)]{p, p \BS p, (p \BS p)^+ \to p}{p \to p & \infer{p, (p \BS p)^+ \to p}{\iddots}}}}
$$
We see that actually we don't have to develop the derivation tree further, since the sequent
$p, (p \BS p)^+ \to p$ on top already appears lower in the derivation, and now
this tree can be built up to an infinite one in a regular way. 

We define the notion of
{\em cyclic proof} as done in~\cite{Shamkanov14}\cite{Shamkanov16} (for GL, the G\"odel -- L\"ob logic) and call this system~$\Lcirc$.
In contrast to the situation with GL, however, here $\Lcirc$ is {\em strictly weaker} than $\Lomega$ ($\Linfty$)
due to complexity reasons. Indeed, $\Lomega$ is $\Pi_1^0$-hard, while in $\Lcirc$ derivations are finite and
the derivability problem is recursively enumerable (belongs to $\Sigma_1^0$). This is true even in the signature
without $\vee$.

For the extension of $\Lcirc$ with additive disjunction, we show that the cyclic system $\Lcirc(\vee)$
 is equivalent to the corresponding variant of {\em action logic} considered
by Pratt~\cite{Pratt91}, Kozen~\cite{Kozen94}, and Jipsen~\cite{Jipsen04}. The difference is due to Lambek's
non-emptiness restriction and the use of positive iteration instead of Kleene star.

Formally, cyclic derivations are defined as follows. The system $\Lcirc(\vee)$ has the same axioms and rules
as $\Linfty(\vee)$, but infinite derivations are not allowed. Instead, for each application of the $({}^+\to)_{\mathrm{L}}$ rule
that yields $\Gamma, A^+, \Delta \to B$ we trace the active occurrence of $A^+$ upwards and are allowed to stop
if we again get the same sequent, $\Gamma, A^+, \Delta \to B$ with the same occurrence of $A^+$. This sequent is {\em backlinked}
to the original one, forming a cycle. The cut rule is also allowed. Note that in the bottom of each cycle we always have the 
$({}^+\to)_{\mathrm{L}}$ rule with the active occurrence of $A^+$ which is traced through the cycle, thus satisfying the constraint
needed for infinite derivations with cut. Clearly, every cyclic derivation can be expanded into an infinite one.
On the other hand, the cyclic system $\Lcirc$ is not equivalent to $\Linfty$ due to complexity reasons.

This system $\Lcirc$ appears to have much in common with various {\em coinductive} proof 
systems~\cite{Brandt98}\cite{Jaffar08}\cite{KozenSilva16}\cite{Pous11}\cite{Rosu09}.
These connections are worth further investigation.

The cyclic system $\Lcirc(\vee)$ happens to be equivalent to a non-sequential calculus $\mathbf{ACT}^+$ defined below, which is 
the positive iteration variant of the axioms for action algebras
by Pratt~\cite{Pratt91}:

$$
A \to A \qquad (A \cdot B) \cdot C \to A \cdot (B \cdot C) 
\qquad
A \cdot (B \cdot C) \to (A \cdot B) \cdot C
$$
$$
\infer{A \cdot B \to C}{A \to C \SL B}\qquad
\infer{A \to C \SL B}{A \cdot B \to C}\qquad
\infer{A \cdot B \to C}{B \to A \BS C}\qquad
\infer{B \to A \BS C}{A \cdot B \to C}
$$
$$
\infer{A \to C}{A \to B & B \to C}\qquad
\infer{A \to B_1 \vee B_2}{A \to B_i}
\qquad
\infer{A_1 \vee A_2 \to B}{A_1 \to B & A_2 \to B}
$$
$$
A \vee (A^+ \cdot A^+) \to A^+
\qquad
\infer{A^+ \to B}{A \vee (B \cdot B) \to B}
$$

The rules for $\BS$, $\SL$, and $\cdot$ correspond to the non-sequential formulation of the Lambek calculus~\cite{Lambek58}.

\begin{lemma}\label{Lm:shortrule}
The following rule is admissible in $\mathbf{ACT}^+$:
$$
\infer{A^+ \to C}{A \to C & C \cdot A \to C}
$$
\end{lemma}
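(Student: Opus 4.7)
The plan is to apply the $^+$-induction rule of $\mathbf{ACT}^+$ once, with a carefully chosen $B$ that sits between $A^+$ and $C$, and then to conclude by transitivity. Take
\[ B \;:=\; A \vee \bigl(A \cdot (C \BS C)\bigr). \]
The intuition is that each word of $A^+$ is either already an $A$-factor, or an $A$-factor followed by a nonempty $A$-word; such a suffix lies in $C \BS C$, since $C \cdot A \to C$ makes elements of $A$ (and iteratively of $A^+$) preserve $C$ under right-concatenation. Conversely, $A \cdot (C \BS C) \to C \cdot (C \BS C) \to C$ places $B$ inside $C$.

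First I would record two preliminary facts that follow from residuation alone: $A \to C \BS C$, which is just $C \cdot A \to C$ rewritten, and $(C \BS C) \cdot (C \BS C) \to C \BS C$, obtained from the identity on $C \BS C$ via residuation, associativity, and (derived) monotonicity of $\cdot$. Then $B \to C$ splits by the left rule for $\vee$ into $A \to C$ (hypothesis) and $A \cdot (C \BS C) \to C$; the latter follows by monotonicity from $A \to C$ together with $C \cdot (C \BS C) \to C$.

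The main verification is $B \cdot B \to B$. Distributing $\cdot$ over $\vee$ via repeated residuation reduces this to four obligations of the shape $X \cdot Y \to B$ with $X,Y \in \{A,\; A \cdot (C \BS C)\}$. Each is dispatched by the same recipe: after associativity, use $A \to C \BS C$ to absorb any extra $A$-factor inside the suffix, and $(C \BS C) \cdot (C \BS C) \to C \BS C$ to absorb repeated $C \BS C$-factors, so that every product collapses to $A \cdot (C \BS C)$, whence $\to B$ by the right rule for $\vee$.

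With $A \to B$ immediate and $B \cdot B \to B$ in hand, the combined premise $A \vee (B \cdot B) \to B$ licenses the $^+$-induction rule, yielding $A^+ \to B$; chaining with $B \to C$ by transitivity gives $A^+ \to C$. The only real obstacle I expect is bookkeeping in the $B \cdot B \to B$ step: the four subcases are conceptually uniform and individually routine, but the point-free style of $\mathbf{ACT}^+$ forces distribution and monotonicity to be threaded through residuation chains rather than applied as one-line sequent rules, which is mechanical yet a little fiddly to write out cleanly.
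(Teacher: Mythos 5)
Your proof is correct, but it organizes the induction differently from the paper's. The paper applies the $^+$-induction rule of $\mathbf{ACT}^+$ \emph{twice}: first with invariant $C \BS C$ (using $A \to C \BS C$ and $(C \BS C)\cdot(C \BS C) \to C \BS C$) to obtain $A^+ \to C \BS C$ and hence $A \cdot A^+ \to C$, and then a second time with invariant $A \vee (A \cdot A^+)$ to establish the unfolding law $A^+ \to A \vee (A \cdot A^+)$, a step that also invokes the axiom $A \vee (A^+ \cdot A^+) \to A^+$. You fold both stages into a single application of the induction rule with the $C$-dependent invariant $B = A \vee \bigl(A \cdot (C \BS C)\bigr)$, and your four-case check of $B \cdot B \to B$ does go through: every product collapses to $A \cdot (C \BS C)$ via $A \to C \BS C$ and the idempotence $(C \BS C)\cdot(C \BS C) \to C \BS C$, with $B \to C$ supplied by the first premise and $C \cdot (C \BS C) \to C$. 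Your route is slightly more economical --- one induction instead of two, and no use of the $^+$-introduction axiom --- whereas the paper's detour isolates the reusable, $C$-independent fact $A^+ \to A \vee (A \cdot A^+)$. Both are valid derivations in $\mathbf{ACT}^+$.
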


This lemma is actually a modification of a well-known alternative formulation of the calculus
for action logic (connecting it to Kleene algebra). The difference, again, is in using 
positive iteration instead of Kleene star.

\begin{proof}
The second premise yields $A \to C \BS C$, and since $(C \BS C) \cdot (C \BS C) \to C \BS C$ is derivable,
we get $A \vee ((C \BS C) \cdot (C \BS C)) \to C \BS C$, and therefore $A^+ \to C \BS C$ and
then $C \to C \SL A^+$. By transitivity with $A \to C$ this yields $A \to C \SL A^+$, and therefore
$A \cdot A^+ \to C$. Combining this with $A \to C$, we get $A \vee (A \cdot A^+) \to C$, and it is sufficient
to show $A^+ \to A \vee (A \cdot A^+)$. Denote $A \vee (A \cdot A^+)$ by $B$. We have $A \to B$ and also
$B \cdot B \to B$. Indeed, using distributivity conditions: $(E \vee F) \cdot G \leftrightarrow (E \cdot G) \vee (E \cdot G)$
and $G \cdot (E \vee F) \leftrightarrow (G \cdot E) \vee (G \cdot F)$, that are derivable in $\mathbf{ACT}^+$,
 we replace $B \cdot B$ with $(A \cdot A) \vee (A \cdot A \cdot A^+) \vee
(A \cdot A^+ \cdot A) \vee (A \cdot A^+ \cdot A \cdot A^+)$, and applying the axiom for ${}^+$ and monotonicity,
we see that all four disjuncts here yield $A \cdot A^+$, and therefore $B$. Hence, by the rule
for ${}^+$, we obtain $A^+ \to B$.
\end{proof}

\begin{lemma}\label{Lm:longrule}
The following rule is admissible in $\mathbf{ACT}^+$:
$$
\infer{A^+ \to C}{A \to C & A^2 \to C & \ldots & A^k \to C & A^k \cdot C \to C}
$$
\end{lemma}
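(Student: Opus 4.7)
The plan is to apply the $^+$-induction rule of $\mathbf{ACT}^+$ to a carefully constructed invariant $B$ satisfying $A \to B$, $B \cdot B \to B$, and $B \to C$. As a preliminary, I would record the left-right mirror of Lemma~\ref{Lm:shortrule}: from $A' \to C'$ and $A' \cdot C' \to C'$ one derives $(A')^+ \to C'$ by exactly the symmetric argument, using $C' \SL C'$ in place of $C' \BS C'$.

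Applied to $A' = A^k$ with target $C$, this mirror short rule (with premises $A^k \to C$ and $A^k \cdot C \to C$, both given) yields $(A^k)^+ \to C$. Applied to $A' = A^k$ with target $C \SL A^i$ for $1 \leq i \leq k-1$, it yields $(A^k)^+ \cdot A^i \to C$: the hypothesis $A^k \to C \SL A^i$ unfolds to $A^{k+i} \to C$, which is derivable as $A^{k+i} = A^k \cdot A^i \to A^k \cdot C \to C$ (using $A^i \to C$ and $A^k \cdot C \to C$), and the closure $A^k \cdot (C \SL A^i) \to C \SL A^i$ follows in the same way. I would then prove the commutation $A^i \cdot (A^k)^+ \to (A^k)^+ \cdot A^i$ for $1 \leq i \leq k-1$ by applying the $^+$-induction rule to the residuated invariant $Y = A^i \BS ((A^k)^+ \cdot A^i)$: the component $A^k \to Y$ unfolds to $A^{k+i} = A^k \cdot A^i \to (A^k)^+ \cdot A^i$ via $A^k \to (A^k)^+$, and $Y \cdot Y \to Y$ reduces by two uses of the definition of $Y$ together with the axiom $(A^k)^+ \cdot (A^k)^+ \to (A^k)^+$.

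Finally, let $F = A \vee A^2 \vee \ldots \vee A^{k-1}$ and $E = (A^k)^+$ (the case $k = 1$ is immediately the mirror short rule), and set $B = F \vee E \vee E \cdot F$. Checking $A \vee (B \cdot B) \to B$ involves expanding $B \cdot B$ into its nine distributive components and reducing each using (a) $F \cdot F \to B$, where each power $A^{i+j}$ with $2 \leq i+j \leq 2k-2$ lands in $F$, in $E$, or in $E \cdot F$ via $A^{i+j} = A^k \cdot A^{i+j-k}$; (b) the commutation $F \cdot E \to E \cdot F$ from the previous step; (c) the axiom $E \cdot E \to E$. Since $F \to C$ from the original premises, and $E \to C$ and $E \cdot F \to C$ from the mirror short rule applications, the $^+$-induction rule yields $A^+ \to B \to C$. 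The main obstacle is the commutation step: without moving $F$-factors past $E$-factors, the products $F \cdot E$ appearing in $B \cdot B$ cannot be absorbed into any disjunct of $B$, and the invariance check collapses.
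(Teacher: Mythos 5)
Your proof is correct, and its skeleton coincides with the paper's: the invariant you build, $B = F \vee E \vee E\cdot F$ with $F = A \vee \ldots \vee A^{k-1}$ and $E = (A^k)^+$, is (up to absorbing $A^k$ into $E$) exactly the paper's disjunction $A \vee \ldots \vee A^k \vee (A^k)^+ \vee (A^k)^+\cdot A \vee \ldots \vee (A^k)^+\cdot A^{k-1}$, and your disjunct-by-disjunct verification of $B \to C$ via the left-right mirror of Lemma~\ref{Lm:shortrule} (applied to $A^k$ with targets $C$ and $C \SL A^i$) reproduces the paper's derivations of $(A^k)^+ \to C$ and $(A^k)^+ \cdot A^i \to C$. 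Where you genuinely diverge is in certifying that $B$ is an invariant. You aim at the bare ${}^+$-induction rule and therefore must check $B \cdot B \to B$, which forces you through the commutation $A^i \cdot (A^k)^+ \to (A^k)^+ \cdot A^i$; your proof of it, by a further ${}^+$-induction on $Y = A^i \BS ((A^k)^+ \cdot A^i)$ using $A^i\cdot A^k \leftrightarrow A^k \cdot A^i$ (re-association of a power of $A$) and $(A^k)^+ \cdot (A^k)^+ \to (A^k)^+$, is sound. The paper instead checks only $B \cdot A \to B$ and then invokes Lemma~\ref{Lm:shortrule}: right-multiplying by a single $A$ keeps every product in the shape $A^{j}$ or $(A^k)^+ \cdot A^{j}$, so no factor ever needs to be moved past $(A^k)^+$, and the few overflow disjuncts fold back into $(A^k)^+$ or $(A^k)^+ \cdot A$. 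So the obstacle you flag at the end -- that $F \cdot E$ cannot be absorbed without commutation -- is real on your route but is precisely what Lemma~\ref{Lm:shortrule} was set up to avoid. Your version costs one auxiliary mirror lemma and one commutation lemma; the paper's costs nothing beyond what is already proved; both are valid, and yours has the mild side benefit of exhibiting the commutation $A^i \cdot (A^k)^+ \to (A^k)^+ \cdot A^i$ as a derivable principle of $\mathbf{ACT}^+$.
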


Lemma~\ref{Lm:longrule} is essential for emulating cyclic reasoning in the non-sequential calculus $\mathbf{ACT}^+$.
The $k$ parameter corresponds to the number of $({}^+\to)_\mathrm{L}$ applications in the cycle.

\begin{proof}
First we prove that $$A^+ \to A \vee A^2 \vee \ldots \vee A^k \vee (A^k)^+ \vee (A^k)^+ \cdot A \vee
\ldots \vee (A^k)^+ \cdot A^{k-1}$$ is derivable in this calculus. We denote the right-hand side of this
formula by $B$ and show $A \to B$ and $B \cdot A \to B$ (this yields $A^+ \to B$ by Lemma~\ref{Lm:shortrule}).  The first is trivial.
For the second, using distributivity conditions, we replace $B \cdot A$ with
$$A^2 \vee A^3 \vee \ldots \vee A^k \vee A^{k+1} \vee (A^k)^+ \cdot A \vee (A^k)^+ \cdot A^2 \vee \ldots
\vee (A^k)^+ \cdot A^k \vee (A^k)^+ \cdot A^{k+1}.$$
All types in this long disjunction, except $A^{k+1}$ and $(A^k)^+  \cdot A^{k+1}$, belong to the disjunction $B$
(and therefore yield $B$). For the two exceptions we have the following: $A^{k+1} \to (A^k)^+ \cdot A$ and
$(A^k)^+ \cdot A^{k+1} \to (A^k)^+ \cdot A$.

Now we prove the lemma itself by deriving $B \to C$. To do this, we need to show $H \to C$ for any disjunct $H$ in $B$. For $H = A$, \dots,
$H = A^k$ this is stated in the premises. Since that $(C \SL C) \cdot (C \SL C) \to C \SL C$ is derivable and $A^k \to C \SL C$ follows from
the last premise, we get $A^k \vee ((C \SL C) \cdot (C \SL C)) \to (C \SL C)$, and therefore $(A^k)^+ \to C \SL C$. Thus,
$(A^k)^+ \cdot C \to C$, then $C \to (A^k)^+ \BS C$, and by transitivity with $A^i \to C$ we get $(A^k)^+ \cdot A^i \to C$ for any $i = 1, \dots, k-1$.
It remains to show $(A^k)^+ \to C$. We have $(A^k)^+ \cdot A^k \to C$ and also $A^k \to C$ as a premise. One can easily prove
$(A^k)^+ \to A^k \vee ((A^k)^+ \cdot A^k)$ and thus establish $(A^k)^+ \to C$.

Finally, by transitivity from $A^+ \to B$ and $B \to C$ we obtain $A^+ \to C$.
\end{proof}

\begin{theorem}
A sequent (of the form $E \to F$) is derivable in $\Lcirc(\vee)$ if{f} it is derivable in $\mathbf{ACT}^+$.
\end{theorem}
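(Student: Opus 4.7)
The plan is to prove both directions of the equivalence separately, using the uniform translation that sends a $\Lcirc(\vee)$-sequent $A_1, \dots, A_n \to C$ to the $\mathbf{ACT}^+$-inequality $A_1 \cdot {\dots} \cdot A_n \to C$. Most of the technical content lies in the direction $\Lcirc(\vee) \Rightarrow \mathbf{ACT}^+$, where Lemma~\ref{Lm:longrule} is the key tool for discharging cycles.

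For the direction $\mathbf{ACT}^+ \Rightarrow \Lcirc(\vee)$ I would simulate each axiom and rule of $\mathbf{ACT}^+$ inside $\Lcirc(\vee)$. Identity is the axiom; associativity is already provable in $\LL$; the residuation rules are handled by $(\to\BS)$, $(\BS\to)$, $(\to\SL)$, $(\SL\to)$, $(\to\cdot)$, $(\cdot\to)$; transitivity is cut; the $\vee$ axioms and rule match the $\vee$-sequent rules; and the axiom $A \vee (A^+ \cdot A^+) \to A^+$ follows from $(\to{}^+)_1$ and $(\to{}^+)_{\mathrm{L}}$ together with $(\vee\to)$. The only non-routine case is the iteration rule $A \vee (B \cdot B) \to B \Rightarrow A^+ \to B$. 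Having inductively extracted $\Lcirc(\vee)$-proofs of $A \to B$ and $B, B \to B$ from the premise, I would build the cyclic derivation
\[
\infer[({}^+\to)_{\mathrm{L}}]{A^+ \to B}
{A \to B &
\infer[(\mathrm{cut})]{A, A^+ \to B}
{A^+ \to B \text{ (backlink)} &
\infer[(\mathrm{cut})]{A, B \to B}{A \to B & B, B \to B}}}
\]
The active occurrence of $A^+$ in the bottom $({}^+\to)_{\mathrm{L}}$ is propagated through both cuts to the backlink occurrence, so the cyclic-proof constraint on the infinite path is satisfied.

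For the converse I would induct on the number of cycles in the cyclic derivation. The base case (zero cycles) is a finite $\Linfty(\vee)$-proof; each of its rules translates directly, with Lambek and $\vee$ rules using their algebraic counterparts, $(\to{}^+)_1$ and $(\to{}^+)_{\mathrm{L}}$ invoking the axiom $A \vee (A^+ \cdot A^+) \to A^+$, and cut becoming transitivity. In the inductive step I would pick an innermost cycle, so that the subderivations feeding into the cycle body (the left premises of the traced $({}^+\to)_{\mathrm{L}}$-applications together with the side premises of other rules inside the body) are themselves cyclic proofs with strictly fewer cycles and are translatable by the inductive hypothesis. The cycle's root is $({}^+\to)_{\mathrm{L}}$ applied to a sequent $\Gamma, A^+, \Delta \to C$, translating to $A^+ \to C'$ with $C' := \pi(\Gamma) \BS (C \SL \pi(\Delta))$. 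Tracing the active $A^+$ upward, the cycle body passes through $k$ applications of $({}^+\to)_{\mathrm{L}}$; the $i$-th left premise together with the intermediate non-iteration rules yields, after translation, a $\mathbf{ACT}^+$-derivation of $A^i \to C'$. Treating the backlink formally as a fresh $C' \to C'$ and substituting throughout, the cycle body becomes a $\mathbf{ACT}^+$-derivation of $A^k \cdot C' \to C'$. Feeding these $k+1$ premises into Lemma~\ref{Lm:longrule} discharges the cycle and yields $A^+ \to C'$; the rest of the original proof then uses this sequent as a derived step, and since we have removed one cycle the induction closes.

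The principal obstacle is the bookkeeping inside the cycle body. Intermediate cuts, Lambek rules and $\vee$-rules can rearrange both the accumulated $A$'s produced by the traced $({}^+\to)_{\mathrm{L}}$-applications and the surrounding context; verifying that these rearrangements pull back to algebraic manipulations in $\mathbf{ACT}^+$ which really do produce the form $A^i \to C'$ for the left premises and $A^k \cdot C' \to C'$ for the substituted cycle body is the technical heart of the argument. Lemma~\ref{Lm:longrule} is tailored for exactly this use: its parameter $k$ matches the number of traced $({}^+\to)_{\mathrm{L}}$-applications in the cycle, and its last premise $A^k \cdot C \to C$ captures the loop invariant supplied by the backlink.
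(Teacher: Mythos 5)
Your overall strategy coincides with the paper's in both directions: the same cyclic derivation (modulo the order of the two cuts) validates the iteration rule of $\mathbf{ACT}^+$ inside $\Lcirc(\vee)$, and the converse is handled exactly as in the paper by induction on the number of cycles, extracting $A^j \to C$ from the left premises of the traced $({}^+\to)_{\mathrm{L}}$ applications, extracting $A^k \cdot C \to C$ from the cycle body after substituting $A^i, C$ for the traced occurrence of $A^+$, and discharging the cycle with Lemma~\ref{Lm:longrule}. The bookkeeping you flag as the technical heart is carried out in the paper precisely by that substitution, under which the traced $({}^+\to)_{\mathrm{L}}$ applications trivialize and all other rules remain valid.

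One step fails as stated: the axiom $A \vee (A^+ \cdot A^+) \to A^+$ does \emph{not} follow from $(\to{}^+)_1$ and $(\to{}^+)_{\mathrm{L}}$ together with $(\vee\to)$. The right rules of $\Lcirc(\vee)$ are asymmetric: $(\to{}^+)_{\mathrm{L}}$ requires its left premise to have the form $\Pi_1 \to A$, so deriving the disjunct $A^+ \cdot A^+ \to A^+$ this way would need $A^+ \to A$, which is not derivable; these rules only give $A \cdot A^+ \to A^+$ directly. The paper repairs this with Pratt's trick: from $A, A^+ \to A^+$ infer $A \to A^+ \BS A^+$, observe that $(A^+ \BS A^+), (A^+ \BS A^+) \to A^+ \BS A^+$ is derivable, and apply the already-validated iteration rule of $\mathbf{ACT}^+$ (i.e., exactly the cyclic derivation you constructed) to obtain $A^+ \to A^+ \BS A^+$ and hence $A^+ \cdot A^+ \to A^+$. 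Alternatively, one can give a direct cyclic derivation of $A^+, A^+ \to A^+$ by applying $({}^+\to)_{\mathrm{L}}$ to the left occurrence and backlinking. With this local repair your argument is complete and matches the paper's.
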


\begin{proof}
{\em The ``if'' part} is easier. The rules operating Lambek connectives ($\cdot$, $\SL$, and $\BS$) can be emulated
in the sequential calculus due to Lambek~\cite{Lambek58}. The rules for $\vee$ in $\mathbf{ACT}^+$ directly correspond to
the rules for $\vee$ in $\Lcirc(\vee)$. 

The following cyclic derivation yields $A^+ \to B$ from $A \to B$ and $B, B \to B$, thus establishing the rule for $^+$ from
$\mathbf{ACT}^+$:
$$
\infer[({}^+\to)_{\mathrm{L}}]{A^+ \to B}
{A \to B & \infer[(\mathrm{cut})]{A, A^+ \to B}{A \to B & \infer[(\mathrm{cut})]{B, A^+ \to B}{\infer{A^+ \to B}{\vdots} & B, B \to B}}}
$$
The track of $A^+$ goes through the cycle, and the $({}^+\to)_\mathrm{L}$ rule is applied to it at every round.

Finally, for $A^+ \cdot A^+ \to A^+$, we first derive $A \cdot A^+ \to A^+$ 
(using $(\to^+)_{\mathrm{L}}$ and $(\cdot\to)$), and then, following Pratt~\cite{Pratt91}, transform it into $A^+ \cdot A^+ \to A^+$:
$$
\infer{A^+ \cdot A^+ \to A^+}
{\infer{A^+ \to A^+ \BS A^+}
{\infer{A \vee ((A^+ \BS A^+) \cdot (A^+ \BS A^+)) \to A^+ \BS A^+}
{\infer{A \to A^+ \BS A^+}{A, A^+ \to A^+} & A^+ \BS A^+, A^+ \BS A^+ \to A^+ \BS A^+}}}
$$
In this derivation we've used other rules of $\mathbf{ACT}^+$, which were previously shown to be valid in $\Lcirc(\vee)$.
Together with $A \to A^+$ (derivable using $(\to{}^+)_1$), this yields the last axiom of $\mathbf{ACT}^+$, 
$A \vee (A^+ \cdot A^+) \to A^+$.

For {\em ``only if'' part,} we first replace all cycles in the $\Lcirc(\vee)$ derivation by applications
of the rule from Lemma~\ref{Lm:longrule}. We proceed by induction on the number of cycles. For the induction step,
let the derivation end with an application of $(^{}+\to)_\mathrm{L}$, involved in a cycle. Let $k$ be the number
of applications of $({}^+\to)_{\mathrm{L}}$ to the active occurrence of $A^+$ that is tracked along this cycle.
Let the goal sequent be $\Gamma, A^+, \Delta \to B$; the same sequent appears on top of the cycle:
$$
\infer[({}^+\to)_{\mathrm{L}}]{\Gamma, A^+, \Delta \to B}
{\Gamma, A, \Delta \to B & \infer{\Gamma, A, A^+, \Delta \to B}{\infer{\vdots}{\Gamma, A^+, \Delta \to B}}}
$$
Let $C = \Gamma \BS B \SL \Delta$ (if $\Gamma$ or $\Delta$ contains more than one formula, we add $\cdot$'s between them;
if $\Gamma$ or $\Delta$ is empty, we omit the corresponding division). The sequent $\Gamma, C, \Delta \to B$ is derivable
in the Lambek calculus. Then we go down the cycle path, replacing the active $A^+$ with $A^i, C$. We start with $i = 0$
and increase $i$ each time we come across $({}^+\to)_{\mathrm{L}}$ applied to the active $A^+$. After this substitution,
this application becomes trivial: instead of
$$
\infer[({}^+\to)_{\mathrm{L}}]{\Gamma', A^+, \Delta' \to B'}{\Gamma', A, \Delta' \to B' & \Gamma', A, A^+, \Delta' \to B'}
$$
we get
$$
\infer{\Gamma', A^{i+1}, C, \Delta' \to B'}{\Gamma', A, A^i, C, \Delta' \to B'}
$$
and actually forget about the left premise of the rule. All other rules remain valid. In the end, this gives us $\Gamma, A^k, C, \Delta \to B$,
or $A^k \cdot C \to C$. Moreover, the derivation of this sequent was obtained by substitution and cutting some branches from the original
derivation, and therefore contains less cycles. By induction, we can suppose that $A^k \cdot C \to C$ was derived without cycles, using
the rule from Lemma~\ref{Lm:longrule}.

Next, for an arbitrary $j$ from 1 to $k$, we go upwards along the trace of the active $A^+$ and find the $j$-th application of $({}^+\to)_{\mathrm{L}}$:
$$
\infer{\Gamma', A^+, \Delta' \to B'}{\Gamma', A, \Delta' \to B' & \Gamma', A, A^+, \Delta' \to B'}
$$
Now we cut off the right (cyclic) derivation branch and replace $A^+$ in the goal with $A$. Next, we trace it down
back to the original sequent, replacing $A^+$ with $A^i$. The index $i$ starts from 1 and gets increased each time we
pass through the $({}^+\to)_{\mathrm{L}}$ rule with the active $A^+$. Again, these applications trivialize, all other rules
remain valid. In the end, we get $\Gamma, A^j, \Delta \to B$ derivable with a less number of cycles. This yields $A^j \to C$.

Finally, having $A \to C$, $A^2 \to C$, \dots, $A^k \to C$, and $A^k \cdot C \to C$, we apply Lemma~\ref{Lm:longrule} and obtain
$A^+ \to C$. Using cut, we invert $(\cdot\to)$, $(\to\SL)$, and $(\to\BS)$, decompose $C$ and arrive at the original goal
sequent $\Gamma, A^+, \Delta \to B$.

This finishes the non-trivial part of the proof: now we have a normal, non-cyclic derivation, and it remains to show that other rules
of $\Lcirc(\vee)$ used in it are admissible in $\mathbf{ACT}^+$. (Formally speaking, the languages of $\Lcirc(\vee)$ and $\mathbf{ACT}^+$ are
different. In $\mathbf{ACT}^+$, instead of sequents of the form $A_1, \dots, A_n \to B$, we consider $A_1 \cdot \ldots \cdot A_n \to B$.)

The rules for Lambek connectives ($\BS$, $\SL$, and $\cdot$), and also the cut rule,
are admissible in $\mathbf{ACT}^+$ due to Lambek~\cite{Lambek58}. The rules for $\vee$
correspond directly. Finally, the $(\to{}^+)_1$ and $(\to{}^+)_{\mathrm{L}}$ are validated as follows (here we use previously validated Lambek rules):
$$
\infer{\Pi \to A^+}{\Pi \to A &
\infer{A \to A^+}{\infer{A \to A \vee (A^+ \cdot A^+)}{A \to A} & A \vee (A^+ \cdot A^+) \to A^+}}
$$
$$
\infer{\Pi_1, \Pi_2 \to A^+}
{\infer{\Pi_1, \Pi_2 \to A^+ \cdot A^+}{\infer{\Pi_1 \to A^+}{\Pi_1 \to A} & \Pi_2 \to A^+} & 
\infer{A^+ \cdot A^+ \to A^+}{\infer{A^+ \cdot A^+ \to A \vee (A^+ \cdot A^+)}{A^+ \cdot A^+ \to A^+ \cdot A^+} & A \vee (A^+ \cdot A^+) \to A^+}}
$$
\end{proof}

Note that, despite the fact that the calculus for $\mathbf{ACT}^+$ is symmetric, the asymmetry in the rules of
$\Lcirc(\vee)$ is essential for our reasoning, because if we allow both left and right rules for ${}^+$, the rule from Lemma~\ref{Lm:longrule},
that is used to emulate cyclic derivation, would transform into
$$
\infer{A^+ \to C}{A \to C & A^2 \to C & \dots & A^k \to C & A^\ell \cdot C \cdot A^{k-\ell} \to C}
$$
and for this rule we don't know whether it is admissible in $\mathbf{ACT}^+$.

\section{Further Work and Open Questions}
In this section we summarize the questions that are still (to the author's best knowledge) unsolved.

\begin{enumerate}
\item Though we don't claim cut elimination for $\Linfty$ in this paper, it looks plausible that it could be proven using
continuous cut elimination (cf.~\cite{Mints78}\cite{SavateevShamkanov}). For $\Lcirc$, however, the problem looks harder, since if one
unravels the cyclic derivation into an infinite one and eliminates cut, the resulting derivation could be not cyclic anymore.
\item In this paper we use complexity arguments to show that $\Lomega$ is strictly more powerful than any its
subsystem with finite derivations. This doesn't yield any examples of concrete sequents derivable in $\Lomega$ and
not derivable, say, in $\Lcirc$. Constructing such examples is yet an open problem.
\item We don't know whether the rule in the end of Section~\ref{S:cyclic} is admissible if $\mathbf{ACT}^+$. If yes,
we could allow both left and right rules for ${}^+$ is cyclic derivations, and this system would be still equivalent to
$\mathbf{ACT}^+$.
\item Safiullin's construction (see Appendix) essentially uses Lambek's non-empti\-ness restriction. The question whether
any context-free language can be generated by a categorial grammar with unique type assignment, based on the variant
of the Lambek calculus allowing empty left-hand sides of sequents, is still open. From our perspective, a positive answer
to this question (maybe, by modification of Safiullin's construction) would immediately yield $\Pi_1^0$-hardness of
the Lambek calculus allowing empty left-hand sides of sequents, enriched with Kleene star (but without additive conjunction
and disjunction), thus solving a problems posed by Buszkowski~\cite{Buszkowski07}.
\item An open (and, in the view of the sophisticatedness of Pentus' completeness proof~\cite{Pentus95}, very hard)
question is the completeness of $\Lomega$ w.r.t. language interpretation (see Section~1). A partial completeness result,
for the fragment where $^+$ is allowed only in the denominators of $\BS$ and $\SL$, was obtained by Ryzhkova~\cite{Ryzhkova13},
using Buszkowski's canonical model construction~\cite{Buszkowski82}.
\end{enumerate}

\subsection*{Acknowledgments} 
The author is grateful to Arnon Avron, Lev Beklemishev, Michael Kaminski, Max Kanovich, Glyn Morrill, Fedor Pakhomov, 
Mati Pentus, Nadezhda Ryzhkova, Andre Scedrov, Daniyar Shamkanov, and Stanislav Speranski for fruitful discussions.
The author is also grateful to the anonymous referees for their comments.

\section*{Appendix: Safiullin's Construction Revisited}

Theorem~\ref{Th:Safiullin} by Safiullin is a crucial component of our $\Pi_1^0$-hardness proof for $\Lomega$. Unfortunately,
Safiullin's paper~\cite{Safiullin07} is very brief and, moreover, includes this theorem (which is probably the most
interesting result of that paper) as a side-effect of a more complicated construction. This makes it very hard to follow Safiullin's ideas
and arrive at a complete proof. Therefore, in this Appendix we present Safiullin's proof clearly and in detail.

In this Appendix, the ${}^+$ connective is never used, and $\Tp$ stands for the set of types constructed from
primitive ones using $\cdot$, $\BS$, and $\SL$.

Define the {\em top} of a Lambek type in the following way:
$
\Top(q) = q \text{ for $q \in \PR$}$; $
\Top(A \BS B)= \Top(B \SL A) = \Top(B).
$
Note that the $A \cdot B$ case is missing. Thus, not every type has a top.

For types with tops, the $(\to\cdot)$ rule is invertible (proof by induction): 
\begin{lemma}\label{Lm:cdot_inv}
If all types of $\Pi$ have tops and $\Pi \to A_1 \cdot \ldots \cdot A_n$ is derivable in $\LL$, then
$\Pi = \Pi_1, \dots, \Pi_n$ and $\Pi_i \to A_i$ is derivable for every $i = 1, \ldots, n$.
\end{lemma}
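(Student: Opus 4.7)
The plan is to invoke cut-elimination for $\LL$ (Lambek 1958) so as to work with a cut-free derivation of $\Pi \to A_1 \cdot \ldots \cdot A_n$, and then induct on the structure of that derivation. The crucial observation driving the whole argument is that $\Top(A \BS B) = \Top(B)$ and $\Top(B \SL A) = \Top(B)$: whenever a divisional formula has a top, the ``numerator'' $B$ inherits it. This ensures that the invariant ``every antecedent type has a top'' is preserved when one passes from the conclusion of a $(\BS\to)$ or $(\SL\to)$ rule to its premise containing $B$, so that the inductive hypothesis still applies there.

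Case analysis on the last rule handles most subcases routinely. In $(\mathrm{ax})$ the antecedent is a single type equal to $A_1 \cdot \ldots \cdot A_n$, so the top hypothesis forces $n=1$ and the claim is immediate. In $(\to\cdot)$ the outermost $\cdot$ of the right-hand side cuts the factor list into two contiguous pieces $D = A_1 \cdot \ldots \cdot A_k$ and $E = A_{k+1} \cdot \ldots \cdot A_n$, the antecedent splits accordingly into $\Gamma, \Delta$, and since $\Gamma$ and $\Delta$ inherit the top property, applying the inductive hypothesis to the two premises yields the required decomposition. The rules $(\to\BS)$ and $(\to\SL)$ cannot be the last rule since the conclusion's right-hand side is a product; $(\cdot\to)$ cannot be the last rule either, because its principal formula would be a product sitting in $\Pi$, contradicting the top hypothesis.

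The only substantial case is $(\BS\to)$ (with $(\SL\to)$ symmetric). Here $\Pi = \Gamma, \Pi', A \BS B, \Delta$ and the premises are $\Pi' \to A$ and $\Gamma, B, \Delta \to A_1 \cdot \ldots \cdot A_n$. By the key observation, every formula in $\Gamma, B, \Delta$ has a top, so the inductive hypothesis yields a decomposition $\Gamma, B, \Delta = \Sigma_1, \ldots, \Sigma_n$ with each $\Sigma_i \to A_i$ derivable. The marked occurrence of $B$ lies in exactly one block $\Sigma_j = \Gamma'', B, \Delta''$; setting $\Sigma_j' = \Gamma'', \Pi', A \BS B, \Delta''$ and applying $(\BS\to)$ to $\Pi' \to A$ and $\Sigma_j \to A_j$ produces $\Sigma_j' \to A_j$, while leaving the other $\Sigma_i \to A_i$ untouched. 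This gives the desired split of $\Pi$. The only bookkeeping to watch out for is which block hosts the distinguished $B$; the ``top propagates into numerators'' remark is what really unlocks the induction, and without it the case $(\BS\to)$ would be the sole genuine obstacle, since $B$ itself could otherwise have been a product and broken the invariant.
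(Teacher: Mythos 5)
Your proof is correct and takes the approach the paper intends: the paper dispatches this lemma with the single parenthetical remark ``(proof by induction)'', and your induction on the cut-free derivation --- with the key observation that a division type has a top iff its numerator does, so the top invariant survives passage to the right premise of $(\BS\to)$ and $(\SL\to)$, while $(\cdot\to)$ is excluded outright because products have no top --- is exactly the argument being gestured at. No gaps.
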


If a sequent of the form $\Pi \to q$, $q \in \PR$, has a cut-free derivation in $\LL$, trace the occurrence of $q$
back to the axiom of the form $q \to q$, and then trace the left $q$ back to its occurrence in $\Pi$. This occurrence
of $q$ will be called the {\em principal} occurrence (for different derivations, the principal occurrences could
differ).

\begin{lemma}\label{Lm:principal}
The principal occurrence has the following properties:
\begin{enumerate}
\item if all types in $\Pi$ have tops, then the principal occurrence is one of them;
\item if in a derivation of $\Pi, q, \Phi \to q$ the occurrence of $q$ between $\Pi$ and $\Phi$
is principal, then $\Pi$ and $\Phi$ are empty;
\item if in a derivation of $\Pi, q \SL A, \Phi \to q$ the occurrence of $q$ in $q \SL A$ is principal,
then $\Pi$ is empty;
\item if in a derivation of $\Pi, A \BS q, \Phi \to q$ the occurrence of $q$ in $A \BS q$ is principal,
then $\Phi$ is empty.
\end{enumerate}
\end{lemma}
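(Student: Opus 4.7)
The proof proceeds by simultaneous induction on the height of the cut-free derivation, treating all four claims at once. Since the succedent is the primitive type $q$, the last rule cannot be $(\to\cdot)$, $(\to\BS)$, or $(\to\SL)$; it is therefore the axiom $q\to q$ or one of $(\cdot\to)$, $(\BS\to)$, $(\SL\to)$. In the base case $\Pi = q$: the lone $q$ is its own top, establishing (1), and the hypotheses of (2)--(4) either force the required empty contexts or are vacuous. The inductive step rests on the single structural observation that in the binary left rules $(\BS\to)$ and $(\SL\to)$ the succedent $q$ is inherited by the \emph{right} premise, so the trace from the succedent $q$ back to the $q\to q$ axiom enters the right premise and never the left; consequently, the principal occurrence in the conclusion cannot lie in the portion of the antecedent fed to the left premise.

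For (1), the rule $(\cdot\to)$ cannot be last, since it would leave a top-level $A\cdot B$ in the antecedent, which has no top. For $(\BS\to)$ with active formula $A\BS B$, every type in the right premise $\Gamma, B, \Delta \to q$ still has a top (since $\Top(A\BS B)=\Top(B)$), so the induction hypothesis for (1) places the principal either at the top of some type of $\Gamma,\Delta$---preserved in the conclusion---or at $\Top(B)=\Top(A\BS B)$; the $(\SL\to)$ case is symmetric. For (2), with $\Pi, q, \Phi \to q$ and middle $q$ principal, any left rule other than the axiom preserves the principal $q$-atom in the right or sole premise, whose antecedent still has the form $\Pi', q, \Phi'$; but the decomposition leaves an extra residue ($A,B$ for $(\cdot\to)$, or $B$ for $(\BS\to)$, $(\SL\to)$) in either $\Pi'$ or $\Phi'$, and the induction hypothesis for (2) would force that augmented context to be empty---a contradiction. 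Only the axiom can be last, and then $\Pi=\Phi=\emptyset$.

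For (3), with $\Pi, q\SL A, \Phi \to q$ and the $q$ inside $q\SL A$ principal, the decisive case is a last application of $(\SL\to)$ to $q\SL A$ itself: then $\Phi = \Pi', \Delta$, the premises are $\Pi' \to A$ and $\Pi, q, \Delta \to q$, and the freshly exposed $q$ between $\Pi$ and $\Delta$ is principal in the right premise; the induction hypothesis for (2) gives $\Pi = \Delta = \emptyset$, in particular $\Pi=\emptyset$. For any other last rule, the structural observation forces $q\SL A$ to survive in the right or sole premise with the same principal occurrence; the induction hypothesis for (3) then demands that the portion strictly to the left of $q\SL A$ in that premise be empty. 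If the active formula of the rule lay in $\Pi$, its residue would sit inside this portion and forbid its emptiness, excluding the case; if the active formula lay in $\Phi$, the portion is exactly $\Pi$, whence $\Pi=\emptyset$. Claim (4) is entirely symmetric via the $(\BS\to)$ rule acting on $A\BS q$.

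The main obstacle is the case-by-case bookkeeping for (3) and (4): one has to track exactly where the active formula and its ``$\Pi$-portion'' of the $(\BS\to)$ or $(\SL\to)$ rule sit relative to the marked $q\SL A$ (resp.~$A\BS q$), and it is the structural observation on the flow of the principal trace that excludes the subcases in which the marked formula would otherwise be dragged into the left premise, a move inconsistent with its being principal.
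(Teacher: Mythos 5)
Your proof is correct and follows essentially the same route as the paper, which simply performs induction on the cut-free derivation (arguing by contradiction for statements 2--4); your write-up fills in the case analysis that the paper leaves implicit, with the key observation---that the principal trace always passes through the right premise of $(\BS\to)$ and $(\SL\to)$---being exactly what makes that induction go through.
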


\begin{proof}
For statement~1, proceed by induction on derivation. For statements~2--4, suppose the contrary
and also proceed by induction on derivation.
\end{proof}

\begin{lemma}\label{Lm:qq1}
If all types of $\Pi$ have tops, and these tops are not $q$, then $\Pi \to q \SL q$ is not derivable in $\LL$.
\end{lemma}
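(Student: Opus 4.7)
The plan is to reduce this to the structural properties of principal occurrences recorded in Lemma~\ref{Lm:principal}. Suppose, for contradiction, that $\Pi \to q \SL q$ is derivable in $\LL$; then in particular $\Pi$ is non-empty by Lambek's non-emptiness restriction. Using Lambek's cut elimination, the rule $(\to\SL)$ is invertible in $\LL$ (cut the conclusion $\Pi \to q \SL q$ against the easily derivable $q \SL q, q \to q$), so $\Pi, q \to q$ is also derivable, and by cut elimination it admits a cut-free derivation.

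Fix such a cut-free derivation and examine the principal occurrence of $q$ inside the antecedent $\Pi, q$. All types of $\Pi$ have tops by hypothesis, and the trailing type $q$ trivially has top $q$, so statement~1 of Lemma~\ref{Lm:principal} applies and tells us the principal occurrence must coincide with the top of some type in $\Pi, q$. By hypothesis no top of any type in $\Pi$ equals $q$, whereas the top of the trailing $q$ does. Hence the principal occurrence is forced to be exactly that trailing $q$, sitting between $\Pi$ on the left and the empty sequence on the right.

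Now statement~2 of Lemma~\ref{Lm:principal}, applied with the right segment $\Phi$ already empty, forces $\Pi$ to be empty as well, contradicting the non-emptiness established at the start. The argument is short because Lemma~\ref{Lm:principal} does all the structural bookkeeping; the only delicate point to verify is the application of statement~1, which rules out the possibility that the principal occurrence is some $q$ buried inside a type of $\Pi$ whose top is something other than $q$. Statement~1 precisely guarantees that the principal occurrence must be a top, so no extra subcase analysis is needed, and the proof closes.
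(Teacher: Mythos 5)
Your proof is correct and follows essentially the same route as the paper: invert $(\to\SL)$ to obtain $\Pi, q \to q$, use Lemma~\ref{Lm:principal} (statements~1 and~2) to force the trailing $q$ to be principal and hence $\Pi$ to be empty, and derive a contradiction with Lambek's non-emptiness restriction. The only difference is that you spell out the bookkeeping the paper leaves implicit.
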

\begin{proof}
Since $(\to\SL)$ is invertible, we get $\Pi, q \to q$, and by Lemma~\ref{Lm:principal} $\Pi$ should be empty.
But $\to q \SL q$ is not derivable due to Lambek's restriction.
\end{proof}

\begin{lemma}\label{Lm:principal_decomposition}
If in a derivation of $q \SL A, \Phi \to q$ the leftmost occurrence of $q$ is principal or in a derivation of
$\Phi, A \BS q \to q$ the rightmost occurrence of $q$ is principal, then $\Phi \to A$ is derivable.
\end{lemma}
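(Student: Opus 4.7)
I would prove the first half (the second is symmetric) by induction on a cut-free derivation of $q \SL A, \Phi \to q$; cut elimination for $\LL$ guarantees such a derivation exists. Since the succedent $q$ is primitive, no right rule can close the derivation, so the last rule must be one of $(\SL\to)$, $(\BS\to)$, or $(\cdot\to)$.

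The crucial case is when the last rule is $(\SL\to)$ with $q \SL A$ itself as the active formula. Then its premises take the form $\Pi \to A$ and $q, \Delta \to q$, for some decomposition $\Phi = \Pi, \Delta$. The occurrence of $q$ that was sitting inside $q \SL A$ is now exposed as the leftmost $q$ in the right premise, and by hypothesis it is principal there. Applying Lemma~\ref{Lm:principal}(2) to $q, \Delta \to q$ forces $\Delta$ to be empty, hence $\Phi = \Pi$, and the left premise is literally $\Phi \to A$.

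In every other case the last rule acts on a formula of $\Phi$. The occurrence of $q$ inside $q \SL A$ is untouched and travels up into exactly one premise: the sole premise for $(\cdot\to)$, or the right premise for $(\BS\to)$/$(\SL\to)$ applied to a formula of $\Phi$ (the left premise concerns only the argument of the division and does not contain $q \SL A$ at all). The inductive hypothesis applied to that premise yields $\Phi' \to A$ for the correspondingly modified antecedent $\Phi'$, and re-applying the same rule below produces $\Phi \to A$.

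The symmetric statement for $\Phi, A \BS q \to q$ runs identically, with the critical case being $(\BS\to)$ applied to $A \BS q$: the premises are $\Pi \to A$ and $\Gamma, q \to q$ with $\Phi = \Gamma, \Pi$, and Lemma~\ref{Lm:principal}(2) now forces $\Gamma$ to be empty. The main point to verify carefully — the only place any subtlety could hide — is that when the last rule is $(\BS\to)$ or $(\SL\to)$ acting on a formula of $\Phi$, the principal occurrence really does track to the right premise rather than disappearing into the argument branch; this is automatic since $q \SL A$ (respectively $A \BS q$) is a single formula in the context $\Gamma, \Delta$ of the rule and is not split across the two premises.
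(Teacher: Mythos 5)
Your proof is correct and follows the same route as the paper, whose entire stated proof of this lemma is just ``Induction on the derivation.'' One small repair to your final paragraph: in the case of $(\BS\to)$ applied to a formula of $\Phi$, the reason $q \SL A$ cannot land in the argument premise $\Pi \to D$ is not that it ``is a single formula in the context $\Gamma,\Delta$'' --- positionally it could perfectly well be the leftmost formula of $\Pi$ --- but that its $q$ would then trace into the subderivation of $\Pi \to D$ and hence could not be the principal occurrence (which by definition traces to the axiom producing the succedent $q$), contradicting the hypothesis.
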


\begin{proof}
Induction on the derivation. 
\end{proof}

\begin{lemma}\label{Lm:qq2}
If $\Pi, q \SL q, \Phi \to q \SL q$ is derivable in $\LL$, all types from $\Pi$ and $\Phi$ have
tops, and these tops are not $q$, then $\Pi$ and $\Phi$ are empty.
\end{lemma}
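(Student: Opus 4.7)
The plan is to apply invertibility of $(\to\SL)$ and then track the principal occurrence of $q$ through two rounds of case analysis driven by Lemma~\ref{Lm:principal}. Since $(\to\SL)$ is invertible in $\LL$, the derivability of $\Pi, q\SL q, \Phi \to q\SL q$ yields that of $\Pi, q\SL q, \Phi, q \to q$. Every antecedent type of this new sequent has a top: those in $\Pi$ and $\Phi$ by hypothesis, and $q$ and $q\SL q$ by direct computation, both with top equal to $q$. Hence Lemma~\ref{Lm:principal}(1) applies: the principal occurrence of $q$ sits at the top of some antecedent type.

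Since the tops of types in $\Pi$ and $\Phi$ are, by assumption, different from $q$, the only candidates for this principal position are the standalone $q$ on the far right and the numerator $q$ inside $q\SL q$. The standalone case is excluded by Lemma~\ref{Lm:principal}(2): if that $q$ were principal, everything to its left — in particular $q\SL q$ itself — would have to be empty, contradicting its presence. So the principal occurrence must be the numerator $q$ in $q\SL q$, and Lemma~\ref{Lm:principal}(3) then gives $\Pi$ empty.

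To finish the argument I would apply Lemma~\ref{Lm:principal_decomposition} to the derivation of $q\SL q, \Phi, q \to q$ (with $A = q$ and its $\Phi$ instantiated as $\Phi, q$), which produces a derivation of $\Phi, q \to q$. On this smaller sequent I would repeat the top-of-type analysis: the principal $q$ must again lie at the top of an antecedent type, cannot sit inside $\Phi$ since those tops differ from $q$, and must therefore coincide with the standalone $q$. Lemma~\ref{Lm:principal}(2) then forces $\Phi$ to be empty as well.

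I do not anticipate a serious obstacle — the statement is essentially a bookkeeping combination of the four preceding lemmas. The one point that requires care is the observation that $q\SL q$ has top $q$ (and likewise the atomic $q$), because this is what restricts the candidate principal positions to exactly two and makes the case analysis after Lemma~\ref{Lm:principal}(1) both exhaustive and short.
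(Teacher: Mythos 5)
Your proposal is correct and follows essentially the same route as the paper's own proof: invert $(\to\SL)$, rule out the rightmost $q$ as principal via Lemma~\ref{Lm:principal}, conclude $\Pi$ is empty from the principal occurrence being the top of $q \SL q$, extract $\Phi, q \to q$ via Lemma~\ref{Lm:principal_decomposition}, and then force $\Phi$ empty by one more appeal to Lemma~\ref{Lm:principal}. No gaps.
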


\begin{proof}
Again, by inverting $(\to\SL)$ we get $\Pi, q \SL q, \Phi, q \to q$. The rightmost $q$ cannot be principal,
because otherwise $\Pi, q \SL q, \Phi$ is empty (Lemma~\ref{Lm:principal}). The second possibility is the 
top of $q \SL q$. Then, again by Lemma~\ref{Lm:principal}, $\Pi$ is empty, and by Lemma~\ref{Lm:principal_decomposition}
$\Phi, q \to q$ is derivable. Since tops of $\Phi$ are not $q$, the rightmost occurrence of $q$ is principal.
By Lemma~\ref{Lm:principal}, $\Phi$ is empty.
\end{proof}

By $\FG$ we denote the free group generated by the set of primitive types $\PR$.
For every $A \in \Tp$ we define its interpretation in this free group, $\FGI{A}$, as follows:
$
\FGI{q} = q \text{ for $q \in \PR$}$;
$\FGI{A \cdot B} = \FGI{A} \FGI{B}$;
$\FGI{A \BS B} = \FGI{A}^{-1} \FGI{B}$;
$\FGI{B \SL A} = \FGI{B} \FGI{A}^{-1}$.
If $\FGI{A}$ is the unit of $\FG$, $A$ is called a {\em zero-balance type.}

The {\em primitive type} count, $\#_q(A)$, for $q \in \PR$ and $A \in \Tp$, is defined as follows:
$\#_q(q) = 1$; $\#_q(q') = 0$, if $q' \in \PR$ and $q' \ne q$;
$\#_q(A \cdot B) = \#_q(A) + \#_q(B)$; $\#_q(A \BS B) = \#_q(B \SL A) = \#_q(B) - \#_q(A)$.
Notice that if $A$ is a zero-balance type, then $\#_q(A) = 0$ for every $q \in \PR$.

If the sequent $A_1, \dots, A_n \to B$ is derivable in $\LL$, then it is {\em balanced,} namely, $\#_q(B) = \#_q(A_1) + \ldots + \#_q(A_n)$ for
every $q \in \PR$, and $\FGI{A_1} \ldots \FGI{A_n} = \FGI{B}$.

\begin{theorem}[M. Pentus, 1994]\label{Th:conjoinability}
If $\FGI{A_1} = \FGI{A_2} = \ldots = \FGI{A_n}$, then there exists such $B \in \Tp$, that
all sequents $A_1 \to B$, $A_2 \to B$, \ldots, $A_n \to B$ are derivable in $\LL$.~{\rm\cite{Pentus94}}
\end{theorem}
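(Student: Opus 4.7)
The plan is to reduce the statement to the binary case and then prove the binary case by induction on the complexity of the types, using the structural lemmas from the preceding text.

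\emph{Reduction to $n=2$.} Given $A_1, \ldots, A_n$ sharing a free-group image, I would build the upper bound iteratively. Set $B_1 := A_1$, and having produced $B_i$ with $A_j \to B_i$ derivable for all $j \le i$, balance of $\LL$-derivations forces $\FGI{B_i} = \FGI{A_{i+1}}$, so the binary case supplies $B_{i+1}$ with $B_i \to B_{i+1}$ and $A_{i+1} \to B_{i+1}$ both derivable. Cutting the former into the existing derivations extends the property to all $j \le i+1$, and then $B := B_n$ is the desired common upper bound.

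\emph{The binary case.} I would proceed by induction on $|A_1| + |A_2|$, with case analysis on the outermost connectives. If $A_1 = C \SL D$, then $\FGI{A_2} \, \FGI{D} = \FGI{C}$, so I would apply the induction hypothesis to produce a type $E$ with $A_2 \cdot D \to E$ and $C \to E$ derivable (a smaller instance after decomposition), and take $B := E \SL D$; the derivations $A_1 \to B$ and $A_2 \to B$ then follow via $(\to\SL)$, $(\SL\to)$, and cut. Symmetric manipulations handle $A_1 = D \BS C$ and the analogous reductions when the outer connective sits on $A_2$. If both $A_1$ and $A_2$ have $\cdot$ on top, I would decompose both sides and use the cancellation structure in $\FG$ to pair up factors. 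The base case---$A_1$ primitive, say $A_1 = q$, with $\FGI{A_2} = q$---is essentially the content of the structural lemmas on principal occurrences (Lemmas~\ref{Lm:principal} and~\ref{Lm:principal_decomposition}): the principal-occurrence analysis locates a ``reducing path'' in $A_2$ witnessing $A_2 \to q$, and one takes $B := q$.

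The principal obstacle is Lambek's non-emptiness restriction, which blocks naive constructions such as $B := (A_1 \BS A_1) \cdot A_2$ or the ``insertion'' of a zero-balance factor into an antecedent: the rule $(\to \cdot)$ cannot split an antecedent into an empty piece. This forces every intermediate sequent in the construction to carry live primitives on both sides, so the candidate $B$ must be chosen with enough structure that the required subderivations never need to start from an empty antecedent. I expect the bulk of the technical work to lie in a careful normal-form choice for $B$---essentially a Lambek analogue of a free-group reduced word, padded with enough primitive material to feed the $(\BS\to)$ and $(\SL\to)$ rules on each side---together with a verification, based on the principal-occurrence lemmas and the balance count, that this candidate really admits $\LL$-derivations from both $A_1$ and $A_2$.
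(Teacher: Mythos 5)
Your reduction to the binary case (iterating joins and composing with cut) is fine, but the binary case itself does not hold up, and I should note first that the paper contains no proof of Theorem~\ref{Th:conjoinability} to compare against: it is imported as a black box from Pentus~\cite{Pentus94}, whose argument is structured quite differently --- conjoinability is shown to be an equivalence relation and a congruence for $\cdot$, $\BS$, $\SL$, the quotient $\Tp/{\sim}$ is shown to be a group isomorphic to the free group $\FG$, and joins are assembled by amalgamation along zig-zag chains of elementary conjoinability facts (such as $A \sim B \SL (A \BS B)$), never by trying to derive $A_i \to B$ directly by induction on the structure of the $A_i$.

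The decisive gap in your sketch is the base case. You claim that for $A_1 = q$ and $\FGI{A_2} = q$ one may take $B := q$, i.e.\ that $A_2 \to q$ is then derivable. This is false: for $A_2 = q \cdot (p \BS p)$ we have $\FGI{A_2} = q\,p^{-1}p = q$, yet $q, p \BS p \to q$ is underivable, since the only applicable rule $(\BS\to)$ needs a non-empty $\Pi \to p$ and the sole candidate $q \to p$ fails. Equality of free-group images is necessary but far from sufficient for derivability of a single sequent --- this is precisely why the theorem must produce a genuinely new join $B$ rather than one of the given types. Two further steps also do not go through as written. First, the reduction from the pair $(C \SL D,\ A_2)$ to the pair $(C,\ A_2 \cdot D)$ preserves $|A_1|+|A_2|$ exactly (the $\SL$ is traded for a $\cdot$), so your induction measure never decreases; a count of division connectives would fix this particular step but not the others. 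Second, ``pairing up factors'' in the product case breaks down exactly on zero-balance factors, which are the heart of the difficulty: for $A_1 = (p \SL p)\cdot q$ and $A_2 = q$ no join of the form $B_1 \cdot B_2$ can work, because $q \to B_1 \cdot B_2$ would force $(\to\cdot)$ to split a one-type antecedent into two non-empty pieces. You correctly identify Lambek's non-emptiness restriction as the principal obstacle, but the proposal does not overcome it; the congruence/zig-zag route is how Pentus sidesteps it.
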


For a set of zero-balance types $\Uc = \{ A_1, \dots, A_n \}$, we construct an ersatz of their additive disjunction,
$A_1 \vee \ldots \vee A_n$, in the following way. In the notations for types, we sometimes omit
the multiplication sign, $\cdot$, if this doesn't lead to misunderstanding. Let $u$, $t$, and $s$ be fresh primitive types,
not occurring in $A_i$. By Theorem~\ref{Th:conjoinability}, there exist such types $F$ and $G$ that the folllowing sequents
are derivable for all $i = 1, \ldots, n$:
$$
(t \SL t) A_i (t \SL t) \ldots (t \SL t) A_n (t \SL t) \to F,
\qquad
(t \SL t) A_1 (t \SL t) \ldots (t \SL t) A_i (t \SL t) \to G.
$$
Now let 
$$
E = (t \SL t) A_1 (t \SL t) A_2 (t \SL t) \ldots (t \SL t) A_n (t \SL t),
$$
$$
B = E\, (((u \SL F) \BS u) \BS (t \SL t)),
\qquad
C = ((t \SL t) \SL (u \SL (G \BS u))) \, E.
$$
We omit the multiplication sign, $\cdot$, if this doesn't lead to misunderstanding.

Finally,
$
\IS(\Uc) = ((s \SL E) \cdot B) \BS s \SL C.
$

\begin{lemma}\label{Lm:is1}
For each $A_i \in \Uc$, the sequent $A_i \to \IS(\Uc)$ is derivable in $\LL$.
\end{lemma}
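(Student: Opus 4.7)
The plan is to systematically invert $\IS(\Uc)$ to obtain a concrete sequent with an explicit antecedent, and then to build a derivation using the defining properties of $F$ and $G$.

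Abbreviate $T = t \SL t$, $\alpha = ((u \SL F) \BS u) \BS T$, and $\beta = T \SL (u \SL (G \BS u))$, so that $B = E \cdot \alpha$ and $C = \beta \cdot E$. Since $(\to\SL)$, $(\to\BS)$ and $(\cdot\to)$ are all invertible in $\LL$ and every non-emptiness side condition is satisfied, $A_i \to \IS(\Uc)$ is derivable iff $s \SL E,\, E,\, \alpha,\, A_i,\, \beta,\, E \to s$ is. One application of $(\SL\to)$ against the axiom $s \to s$ reduces the problem to the key sequent
\[ (\star) \qquad E,\, \alpha,\, A_i,\, \beta,\, E \to E. \]

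Next I would establish two auxiliary sequents that isolate the roles of $F$ and $G$. Applying $(\BS\to)$ to $\alpha$, then $(\to\BS)$, and finally $(\SL\to)$ to $u \SL F$ against the axiom $u \to u$, reduces the first auxiliary sequent $T, A_i, T, A_{i+1}, T, \ldots, T, A_n, T,\, \alpha \to T$ to the premise $T, A_i, T, A_{i+1}, T, \ldots, T, A_n, T \to F$, which is exactly one of the sequents postulated in the construction of $F$. A symmetric argument, using $(\SL\to)$ on $\beta$, then $(\to\SL)$ and $(\BS\to)$ on $G \BS u$, reduces $\beta,\, T, A_1, T, \ldots, T, A_{i-1}, T, A_i, T \to T$ to the corresponding defining sequent for $G$.

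To derive $(\star)$, unfold both copies of $E$ in the antecedent by iterated $(\cdot\to)$ and split the succedent $E = T \cdot A_1 \cdot T \cdots T \cdot A_n \cdot T$ into its $2n+1$ atomic factors by iterated $(\to\cdot)$. The intended partition is: the first $2i-2$ atoms of the left $E$ match the first $2i-2$ succedent factors via axioms; the remaining $2n-2i+3$ atoms of the left $E$ together with $\alpha$ derive the $(2i-1)$-th succedent factor $T$ via the first auxiliary sequent; the central $A_i$ matches the $2i$-th succedent factor; $\beta$ together with the first $2i+1$ atoms of the right $E$ derive the $(2i+1)$-th succedent factor $T$ via the second auxiliary sequent; and the remaining $2n-2i$ atoms of the right $E$ match the last $2n-2i$ succedent factors by axioms.

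The delicate point is choosing precisely this partition: $\alpha$ and $\beta$ are designed so that $\alpha$ absorbs exactly the suffix of $E$ starting at $A_i$ (invoking the property of $F$) and $\beta$ absorbs exactly the prefix of $E$ ending at $A_i$ (invoking the property of $G$), each yielding a single $T$; if the split were shifted by one atom, the relevant premise for $F$ or $G$ would no longer match. Once the partition is fixed, everything else is a routine iteration of $(\to\cdot)$, $(\cdot\to)$, and axioms.
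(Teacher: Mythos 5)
Your proof is correct and is, in essence, the derivation the paper intends: the paper's own proof of this lemma consists solely of the remark that the derivation is straightforward, and your explicit treatment — reducing to $E, \alpha, A_i, \beta, E \to E$ and partitioning it so that $\alpha$ absorbs the suffix of the left copy of $E$ starting at $A_i$ via the defining sequent for $F$, while $\beta$ absorbs the prefix of the right copy of $E$ ending at $A_i$ via the defining sequent for $G$ — is exactly the construction the types $B$ and $C$ were designed for. The block counts and the ordering of the splits all check out, so there is no gap.
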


\begin{proof}
The derivation is straightforward.
\end{proof}

\begin{lemma}\label{Lm:is2}
If the sequent $\Pi \to \IS(\Uc)$ is derivable in $\LL$, all types in $\Pi$ have tops, and these tops are not $s$ or $t$, then
for some $A_j \in \Uc$ the sequent $B_2, \Pi, C_1 \to A_j$, where $B_2$ is either empty
or is a type such that $B = B_2$ or $B = B_1 \cdot B_2$ for some $B_1$, and $C_1$
is either empty or is a type such that $C = C_1$ or $C = C_1 \cdot C_2$ for some $C_2$ (up to associativity of $\cdot$).
\end{lemma}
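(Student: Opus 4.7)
My plan is to invert the outer connectives of $\IS(\Uc)$, apply principal-occurrence analysis on the freshly introduced primitive $s$, and then use invertibility of $(\to\cdot)$ on the remaining goal $E$ to extract a block around $\Pi$ that derives one of the $A_j$.

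I start by successively inverting $(\to\SL)$, $(\to\BS)$, and $(\cdot\to)$ applied to $\IS(\Uc) = ((s \SL E) \cdot B) \BS s \SL C$, and by fully decomposing $B = E \cdot D$ and $C = C_{\text{pre}} \cdot E$ with $(\cdot\to)$ into their constituent types, where $D = ((u \SL F) \BS u) \BS (t \SL t)$ and $C_{\text{pre}} = (t \SL t) \SL (u \SL (G \BS u))$. This produces a derivable sequent $s \SL E, [B], \Pi, [C] \to s$, with $[B]$ and $[C]$ flat sequences of types, each carrying a top: the $(t \SL t)$-instances, $D$, and $C_{\text{pre}}$ have top $t$; each $A_i$ has its own top (not $s$ or $t$, by freshness); $\Pi$-types have tops outside $\{s,t\}$ by hypothesis; and $s \SL E$ is the unique LHS type with top $s$. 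Lemma~\ref{Lm:principal}(1) forces the principal occurrence of the right-hand $s$ onto the top of $s \SL E$, and Lemma~\ref{Lm:principal}(3) is automatically met since $s \SL E$ is leftmost, so Lemma~\ref{Lm:principal_decomposition} delivers the derivable sequent $[B], \Pi, [C] \to E$.

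Next, I invoke Lemma~\ref{Lm:cdot_inv} to partition the LHS into $2n{+}1$ consecutive blocks matching the factors of $E = (t \SL t) \cdot A_1 \cdots A_n \cdot (t \SL t)$ in order. Using Lemmas~\ref{Lm:qq1} and~\ref{Lm:qq2}, no block deriving a $(t \SL t)$-factor can absorb a $\Pi$-type: such a $\Pi$-type has top neither $s$ nor $t$, and Lemma~\ref{Lm:qq2}, applied to a $(t \SL t)$ sitting in that block, would force the $\Pi$-type to be empty. Hence the whole of $\Pi$ lies in a single block, which must therefore derive some $A_j$-factor. That block takes the form $B_2, \Pi, C_1 \to A_j$ with $B_2$ a (possibly empty) suffix of $[B]$ and $C_1$ a (possibly empty) prefix of $[C]$. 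Reassembling $B_2$ and $C_1$ with $(\to\cdot)$ turns them into a right factor of $B$ and a left factor of $C$ (up to associativity), as demanded.

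The main obstacle is the combinatorial analysis in the third stage. The types $D$ and $C_{\text{pre}}$ both have top $t$ but are more complex than $t \SL t$, and they sit exactly at the $[B]/\Pi$ and $\Pi/[C]$ seams; this leaves open, a priori, that $D$ or $C_{\text{pre}}$ ends up inside the $\Pi$-block and lets it derive a $(t \SL t)$-factor even though $\Pi$-types have non-$t$ tops. Ruling out these configurations is where the $u$-gadget inside $D$ and $C_{\text{pre}}$---together with the specific construction of $F$ and $G$ from Theorem~\ref{Th:conjoinability}---plays an essential role: a balance or principal-occurrence analysis on the fresh primitive $u$ pins down where $D$ and $C_{\text{pre}}$ can sit in any valid partition, and this is exactly the reason $u$, $F$, and $G$ were introduced in the definition of $\IS(\Uc)$ in the first place.
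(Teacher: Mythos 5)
Your first two reductions coincide exactly with the paper's: invert the outer connectives of $\IS(\Uc)$ to reach $(s \SL E), B, \Pi, C \to s$, observe that the top of $s \SL E$ is the only candidate for the principal occurrence of $s$, apply Lemma~\ref{Lm:principal_decomposition} to get $B, \Pi, C \to E$, and then split against the $2n{+}1$ factors of $E$ via Lemma~\ref{Lm:cdot_inv}. The problem is the third stage, and you have diagnosed it yourself: Lemma~\ref{Lm:qq2} only governs sequents $\Pi', t \SL t, \Phi' \to t \SL t$ in which everything outside the displayed $t \SL t$ has top different from $t$, so it says nothing about a part containing $((u \SL F) \BS u) \BS (t \SL t)$ or $(t \SL t) \SL (u \SL (G \BS u))$ (both have top $t$), nor about a part that derives $t \SL t$ without containing any literal $t \SL t$. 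Labelling this ``the main obstacle'' and asserting that ``a balance or principal-occurrence analysis on the fresh primitive $u$'' will dispose of it is not a proof, and the proposed tool is the wrong one: in each of these two types the occurrences of $u$ cancel in the count $\#_u$, and $u$ never occurs as the top of any type in the sequent nor anywhere in $E$, so neither a balance count nor a principal-occurrence argument on $u$ yields any information. The $u$/$F$/$G$ gadget exists to make the \emph{positive} direction (Lemma~\ref{Lm:is1}) derivable, and is exploited again in Subcase~1.2 of the main theorem; it is not what closes this lemma.

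The paper closes the gap by an induction on principal occurrences of $t$: for each part $P \to t \SL t$ of the decomposition, after inverting $(\to\SL)$ the appended $t$ cannot be principal (Lemma~\ref{Lm:principal} plus Lambek's restriction would force $P$ to be empty), so the principal $t$ is the top of some type of $P$, and Lemma~\ref{Lm:principal} forces that type to sit at the left end of $P$. Working from the outside in, this pins the successive $(t \SL t)$-parts and $A_i$-parts to the corresponding factors of $B$ (and, symmetrically, of $C$), so that if $\Pi$ were split across two parts one would eventually need a $(t \SL t)$-part headed by a type of $\Pi$ --- impossible, since those tops are not $t$; this is the ``run out of $t$'s in $B$'' contradiction. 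Some argument of this concrete shape is what your proposal is missing, and it is missing exactly at the point where the lemma's content lies; as written, the proposal is a correct reduction followed by an acknowledged gap.
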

Using the invertibility of $(\cdot\to)$, we replace $\cdot$'s in $B_2$ and $C_1$ by commas, and thus consider them
as sequences of types that have tops. Actually, we want them to be empty, and it will be so in our final construction.

\begin{proof}
Let $\Pi \to \IS(\Uc)$ be derivable. Then one can derive
$(s \SL E), B, \Pi, C \to s$, and then by Lemma~\ref{Lm:principal_decomposition} we get
$B, \Pi, C \to E$ (since the leftmost $s$ is the only top $s$, and it is the principal occurrence).
Recall that $E = (t \SL t) A_1 \dots (t \SL t) A_n (t \SL t)$ and apply Lemma~\ref{Lm:cdot_inv}. It is
sufficient so show that, after decompositon, the whole $\Pi$ comes to one part of the left-hand side of the sequent. Suppose
the contrary, then locate the principal occurrence of $t$ (it should be in $B$). Then proceed by induction: 
finally we run out of $t$'s in $B$ and get a contradiction.
\end{proof}

\begin{proof}[of Theorem~\ref{Th:Safiullin}]
Given a context-free grammar $\Gc$ without $\varepsilon$-rules, we need to construct an equivalent Lambek grammar with unique type assignment. Let
$\Sigma = \{ a_1, \dots, a_\mu \}$ be the alphabet,
$\Nc = \{ N_0, N_1, N_2, \dots, N_\nu \}$ be the set of non-terminal symbols of $\Gc$, $N_0$ is the starting symbol. 

First we algorithmically transform $\Gc$ into Greibach normal form~\cite{Greibach65} with rules of the following three forms:
$N_i \Rightarrow a_j N_k N_\ell$, $N_i \Rightarrow a_j N_k$, or $N_i \Rightarrow a_j$.

Now we construct the Lambek grammar.  Let $\PR$ include
distinct primitive types $p$, $p_1, \dots, p_\nu$, $r$, $u$, $t$, and $s$.
For each $i = 0, \dots, \nu$ let $H_i = p \SL ((p_i \SL p_i) \cdot p)$ (this type corresponds to the non-terminal $N_i$).
Next, for each $j = 1, \dots, \mu$, we form a set $\Uc_j$ in the following way:
\begin{align*}
\text{add } K_{i,k,\ell} = r \SL \bigl( ( H_k \cdot H_\ell \cdot (p_i \SL p_i)) \BS r \bigr) & \text{ for each rule $N_i \Rightarrow a_j N_k N_\ell$,}\\
\text{add } K_{i,k} = r \SL \bigl( ( H_k \cdot (p_i \SL p_i)) \BS r \bigr) & \text{ for each rule $N_i \Rightarrow a_j N_k$,}\\
\text{add } K_i = r \SL \bigl( (p_i \SL p_i) \BS r \bigr) & \text{ for each rule $N_i \Rightarrow a_j$.}
\end{align*}

Now let $D_j = \IS(\Uc_j)$ and $A_j = p \SL (D_j \cdot p)$ be the type corresponding to $a_j$. For the target type $H$ we take $H_0$.
Our {\bf claim} is that $a_{i_1} \dots a_{i_n} \in \Lc(\Gc)$ if{f} the sequent $A_{i_1}, \dots, A_{i_n} \to H_0$ is derivable in $\LL$.

For the easier {\em ``only if'' part,} we prove a more general statement: if $\gamma \in (\Nc \cup \Sigma)^+$ can be generated from
$N_m$ in $\Gc$, then the sequent $\Gamma \to H_m$ is derivable in $\LL$, where $\Gamma$ is a sequence of types corresponding to letters
of $\gamma$, $A_j$ for $a_j \in \Sigma$ and $H_i$ for $N_i \in \Nc$. To establish this, it is sufficient to prove that the following sequents
are derivable (each step of the context-free generation maps to a $(\mathrm{cut})$ with the corresponding sequent):
\begin{align*}
A_j, H_k, H_\ell \to H_i & \text{ for each rule  $N_i \Rightarrow a_j N_k N_\ell$,}\\
A_j, H_k \to H_i & \text{ for each rule $N_i \Rightarrow a_j N_k$,}\\
A_j \to H_i & \text{ for each rule $N_i \Rightarrow a_j$.}
\end{align*}
Consider sequents of the first type (the second and the third types are handled similarly). Since $D_j = \IS(\Uc_j)$
and $K_{i,k,\ell} =   r \SL \bigl( ( H_k \cdot H_\ell \cdot (p_i \SL p_i)) \BS r \bigr) \in \Uc_j$, we have $K_{i,k,\ell} \to D_j$ by Lemma~\ref{Lm:is1}. 
Then the derivation is as follows:
$$
\infer{p \SL (D_j \cdot p), H_k, H_\ell \to p \SL ((p_i \SL p_i) \cdot p)}
{\infer{p \SL (D_j \cdot p), H_k, H_\ell, p_i \SL p_i, p \to p}
{\infer{H_k, H_\ell, p_i \SL p_i, p \to D_j \cdot p}
{\infer[(\mathrm{cut})]{H_k, H_\ell, p_i \SL p_i \to D_j}
{\infer{H_k, H_\ell, p_i \SL p_i \to K_{i,k,\ell}}
{H_k, H_\ell, p_i \SL p_i, (H_k \cdot H_\ell \cdot (p_i \SL p_i)) \BS r \to r} 
& K_{i,k,\ell} \to D_j}  & p \to p} & p \to p}}
$$

For the {\em ``if'' part,} let $A_{i_1}, \dots, A_{i_n} \to H_i$ be derivable and proceed by induction on the cut-free derivation
($i$ is arbitrary here for induction; in the end $i = 0$). Since $H_i = p \SL ((p_i \SL p_i) \cdot p)$ and $(\to\SL)$ and $(\cdot\to)$ are
invertible, we get $A_{i_1}, \dots, A_{i_n}, p_i \SL p_i, p \to p$. 
Locate the principal occurrence of $p$. By Lemma~\ref{Lm:principal}, it is the $p$ in
$A_{i_1} = p \SL (D_{i_1} \cdot p)$, and by Lemma~\ref{Lm:principal_decomposition} the sequent
$A_{i_2}, \dots, A_{i_n}, p_i \SL p_i, p \to D_{i_1} \cdot p$ is derivable.
Let $j = i_1$.
Since all our types have tops, apply Lemma~\ref{Lm:cdot_inv}.

{\em Case 1 (good).} The sequent decomposes into $A_{i_2}, \dots, A_{i_n}, p_i \SL p_i \to D_j$ and $p \to p$.
Consider the former sequent. Tops on the left side are $p$ and $p_i$, we can apply Lemma~\ref{Lm:is2} and get
$B_2, A_{i_2}, \dots, A_{i_n}, p_i \SL p_i, C_1 \to K$ for some $K \in \Uc_j$.

Let's prove that $B_2$ and $C_1$ in this case are empty. Suppose $K = K_{i',k,\ell}$ (the cases of $K_{i',k}$ and $K_{i'}$ are handled
similarly). Then, by invertibility of $(\to\SL)$, we get $B_2, A_{i_2}, \dots, A_{i_n}, p_i \SL p_i, C_1, (H_k \cdot H_\ell \cdot (p_{i'} \SL p_{i'})) \BS r \to r$.
Now locate the principal occurrence of $r$.

{\em Subcase 1.1.} The principal occurrence of $r$ is the rightmost one. By Lemma~\ref{Lm:principal_decomposition}, 
we get $B_2, A_{i_2}, \dots, A_{i_n}, p_i \SL p_i, C_1 \to H_k \cdot H_\ell \cdot (p_{i'} \SL p_{i'})$.
Apply Lemma~\ref{Lm:cdot_inv}. 
First, by Lemma~\ref{Lm:qq1}, $i = i'$, otherwise there's no $p_i$ in tops of the left-hand side.
Next,  the part of the left-hand side that yields $(p_i \SL p_i)$, by Lemma~\ref{Lm:qq2},
contains only $(p_i \SL p_i)$.
 Therefore, $C_1$ is empty. Now, for some part $\Pi$ we have $\Pi \to H_k$, and, decomposing $H_k$,
we get $\Pi, p_i \SL p_i, p \to p$. By Lemma~\ref{Lm:principal}, the principal occurrence of $p$ is not the rightmost one. Since $B_2$ doesn't
have $p$ in tops, $\Pi$ should include also some of the $A_{i_2}, \dots$, and the principal $p$ is the top of one of them.
But, since $A_m$ is of the form $p \SL \dots$, by Lemma~\ref{Lm:principal} the part to the left of this $A_m$, and, therefore,
$B_2$ should be empty.

{\em Subcase 1.2.} The principal occurrence of $r$ is somewhere in $B_2$ or $C_1$, in a type $K \in \Uc_j$.
By Lemma~\ref{Lm:principal}, it is then the leftmost occurrence of $r$, because $K$ has the form $r \SL \ldots$.
This rules out the possibility of it being in $C_1$ (we definitely have $p_i \SL p_i$ to the left of it).
If it is in $B_2$, again by Lemma~\ref{Lm:principal_decomposition}, we get 
$B'_2, A_{i_2}, \dots, A_{i_n}, p_i \SL p_i, C_1 \to H_{k'} \cdot H_{\ell'} \cdot (p_i \SL p_i)$ ($H_{k'}$ and $H_{\ell'}$ are optional),
and we're in the same situation, as Subcase~1.1. Thus, $B'_2$ and $C_1$ should be empty. However, $B'_2$ now should contain the last
type of $B$, $(((u \SL F)\BS u) \SL (t \SL t))$. Contradiction. Subcase 1.2 impossible.

Now we have $A_{i_2}, \dots, A_{i_n}, p_i \SL p_i \to H_k \cdot H_\ell \cdot (p_i \SL p_i)$ (the only choice for the principal $r$ now is
the rightmost one, and we've applied Lemma~\ref{Lm:principal_decomposition}). By Lemma~\ref{Lm:cdot_inv}, we get
$A_{i_2}, \dots, A_{i_z} \to H_k$, $A_{i_{z+1}}, \dots, A_{i_n} \to H_\ell$, $p_i \SL p_i \to p_i \SL p_i$ (the last left
side is $p_i \SL p_i$ alone by Lemma~\ref{Lm:qq2}).

Apply induction hypothesis. In the context-free grammar, we now have $N_k \Rightarrow^* a_{i_2} \dots a_{i_z}$ and
$N_\ell \Rightarrow^* a_{i_{z+1}} \dots a_{i_n}$. Since $i' = i$, we also have the rule $N_i \Rightarrow a_j N_k N_\ell$ ($N_k$ and
$N_\ell$ are optional) in the grammar (since the corresponding $K$ type was in $\Uc_j$). Thus,
$N_i \Rightarrow a_j a_{i_2} \dots a_{i_z} a_{i_{z+1}} \dots a_{i_n}$. Recall that $j = i_1$.

{\em Case 2 (bad).} The sequent decomposes in another way, yielding $A_{i_2}, \dots, A_{i_z} \to D_j$ and
$\dots, p_i \SL p_i, p \to p$. Again, by Lemma~\ref{Lm:is2}, we get $B_2, A_{i_2}, \dots, A_{i_z}, C_1 \to K$ for some $K \in \Uc_j$,
and further $B_2, A_{i_2}, \dots, A_{i_z}, C_1, (H_k \cdot H_\ell \cdot (p_{i'} \SL p_{i'})) \BS r \to r$. Now we locate the principal
occurrence of $r$ and proceed as in Case~1. The only difference, however, is that now there is no $p_i \SL p_i$ in the left-hand
side, and for that reason derivation fails by Lemma~\ref{Lm:qq1}. Thus, Case~2 is impossible.

\end{proof}

\end{document}